\begin{document}
\setlength{\oddsidemargin}{0cm} \setlength{\evensidemargin}{0cm}

\theoremstyle{plain}
\newtheorem{theorem}{Theorem}[section]
\newtheorem{proposition}[theorem]{Proposition}
\newtheorem{lemma}[theorem]{Lemma}
\newtheorem{corollary}[theorem]{Corollary}
\newtheorem{conj}[theorem]{Conjecture}

\theoremstyle{definition}
\newtheorem{definition}[theorem]{Definition}
\newtheorem{exam}[theorem]{Example}
\newtheorem{remark}[theorem]{Remark}

\numberwithin{equation}{section}

\title[Pseudo-Riemannian weakly symmetric manifolds]
{Pseudo-Riemannian weakly symmetric manifolds}

\author{Zhiqi Chen}
\address{School of Mathematical Sciences and LPMC \\ Nankai University \\ Tianjin 300071, P.R. China} \email{chenzhiqi@nankai.edu.cn}

\author{Joseph A. Wolf}
\address{Department of Mathematics \\ University of California, Berkeley \\ CA 94720--3840, U.S.A.} \email{jawolf@math.berkeley.edu}

\subjclass[2000]{}

\keywords{}

\begin{abstract}
There is a well developed theory of weakly symmetric Riemannian manifolds.  
Here it is shown that several results in the Riemannian case are also
valid for weakly symmetric pseudo-Riemannian manifolds, but some
require additional hypotheses.  The topics discussed are homogeneity,
geodesic completeness, the geodesic orbit property, weak symmetries, and
the structure of the nilradical of the isometry group.  Also, we give a number 
of examples of weakly symmetric pseudo-Riemannian manifolds, some mirroring 
the Riemannian case and some indicating the problems in extending Riemannian 
results to weakly symmetric pseudo-Riemannian spaces. 
\end{abstract}

\maketitle

\setcounter{section}{-1}
\section{Introduction}

There have been several important extensions of the theory of
Riemannian symmetric spaces.  Weakly symmetric spaces, introduced
by A. Selberg \cite{Se1}, play key roles in number theory, Riemannian geometry
and harmonic analysis.  Pseudo-Riemannian symmetric spaces, including
semisimple symmetric spaces, play central but complementary roles in 
number theory, differential geometry and relativity, Lie group
representation theory and harmonic analysis.  Here we study the
common extension of these two branches of symmetric space theory,
that of weakly symmetric pseudo-Riemannian manifolds.  
\medskip

It is surprising that the theory of weakly symmetric pseudo-Riemannian 
manifolds has so many open problems.  Here we recall what is known in
its differential-geometric aspect and prove a few new results.  Some
facts, such as homogeneity for weakly symmetric pseudo-Riemannian spaces,
are easy.  Others, in particular questions of sectional curvature and
the structure of the nilradical of the isometry group, are subtle.
\medskip

In Section \ref{prm} we review a number of basic facts on the geometry
of pseudo-Riemannian 
manifolds, especially concerning completeness and homogeneity, 
pointing out the contrast with the Riemannian case.
\medskip

Section \ref{geom} specializes to the setting of weakly symmetric
pseudo-Riemannian manifolds.  After the definition, we give a short proof 
of homogeneity in Proposition \ref{homog}.  In Proposition \ref{interchange}
we describe circumstances under which a pair of points can be
interchanged by an isometry.  Example \ref{no-join} exhibits a class of
symmetric (thus weakly symmetric) pseudo-Riemannian manifolds in which not
every pair of points is joined by an unbroken geodesic.
We then turn to weak symmetries and weakly symmetric coset spaces. 
In Proposition \ref{coset-riem} we recall their relation to weakly 
symmetric Riemannian manifolds, and in Proposition \ref{coset-p-riem}
we extend this to the setting of weakly symmetric pseudo-Riemannian
manifolds.
\medskip

Section \ref{examples} presents a number of examples of weakly symmetric
pseudo-Riemannian manifolds related in one way or another to weakly
symmetric Riemannian manifolds.
\medskip

In Section \ref{geodesics} we study geodesics and prove the
geodesic orbit property for weakly symmetric pseudo-Riemannian manifolds.
We note that the fact that the property that the nilradical of the
isometry group is 2-step nilpotent, which holds in the Riemannian case,
fails in the pseudo-Riemannian setting.  Then we give a condition under
which it holds for weakly symmetric pseudo-Riemannian manifolds.

\section{Pseudo-Riemannian manifolds}\label{prm}

We collect some basic facts on pseudo-Riemannian manifolds, 
following \cite{Ne1}.

\begin{definition}
A {\it pseudo-Riemannian manifold} $(M,\langle,\rangle)$ is a smooth manifold $M$ 
with a nondegenerate inner product $\langle, \rangle$ on the fibers of
its tangent bundle $TM$. Let the expression $(n_+, n_-)$, where 
$n_++n_-=\dim M$, denote the signature
of $\langle, \rangle$.  The manifold $(M,\langle,\rangle)$ Riemannian
in the case where $\langle,\rangle$ has signature $(\dim M,0)$, i.e. is
positive definite.
\hfill $\diamondsuit$\end{definition}

\begin{definition}
A {\it broken geodesic} is a piecewise smooth curve segment whose smooth 
subsegments are geodesics. A pseudo-Riemannian manifold is said to be 
{\it geodesically complete} if every maximal geodesic is defined on the entire real line.
\hfill $\diamondsuit$\end{definition}

\begin{lemma}\label{lem1}
A pseudo-Riemannian manifold is connected if and only if any two points can be connected by a broken geodesic.
\end{lemma}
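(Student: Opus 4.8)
The plan is to prove the two implications separately, treating ``broken geodesic implies connected'' as the routine direction and concentrating on the converse. For the easy direction, suppose any two points of $M$ can be joined by a broken geodesic. A broken geodesic is piecewise smooth, hence continuous, so in particular any two points are joined by a continuous path; thus $M$ is path-connected and therefore connected.

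For the converse, suppose $M$ is connected. I would introduce the relation $p \sim q$ meaning ``$p$ and $q$ can be joined by a broken geodesic,'' and first verify that it is an equivalence relation: reflexivity is the trivial (constant) segment, symmetry follows by reversing the parametrization of each smooth subsegment, and transitivity follows by concatenating two broken geodesics, the concatenation again being piecewise smooth with geodesic pieces. The decisive step is to show that each equivalence class is open. Here I would invoke the exponential map of the Levi-Civita connection: for each $p \in M$ there is a normal neighborhood $U_p$, i.e. $\exp_p$ restricts to a diffeomorphism from a neighborhood of $0 \in T_pM$ onto $U_p$. Then every $q \in U_p$ has the form $q = \exp_p(v)$ and is joined to $p$ by the radial geodesic $t \mapsto \exp_p(tv)$, $0 \le t \le 1$. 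Hence $U_p$ lies in the class of $p$, and every class is open.

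Once the classes are known to be open, standard point-set topology closes the argument: the equivalence classes partition $M$ into disjoint open sets, so each class is also closed, its complement being the union of the remaining open classes. Since $M$ is connected and the classes are nonempty, there is exactly one class, which says precisely that every pair of points is joined by a broken geodesic.

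The point requiring care, rather than genuine difficulty, is the existence of normal neighborhoods in the indefinite (pseudo-Riemannian) setting. Unlike the Riemannian case one cannot appeal to metric balls, but the construction of normal neighborhoods depends only on the affine connection: since $d(\exp_p)_0 = \mathrm{id}_{T_pM}$, the inverse function theorem produces the required local diffeomorphism regardless of the signature of $\langle,\rangle$. This is the one place where the indefinite metric must be kept in mind, and it causes no real obstruction.
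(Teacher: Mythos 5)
Your proof is correct: the easy direction via path-connectedness and the converse via the equivalence relation whose classes are open (using normal neighborhoods furnished by $\exp_p$, which exist in any signature since they depend only on the affine connection) is exactly the standard argument. The paper states this lemma without proof, citing O'Neill \cite{Ne1}, where essentially this same argument appears, so there is nothing to reconcile.
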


In the connected Riemannian case we have a metric space structure where
the distance $d(x,y)$ is the infimum of the lengths of sectionally smooth
curves joining $x$ to $y$.  The classical Hopf-Rinow theorem is
\begin{theorem}
For a connected Riemannian manifold $M$, the following conditions are 
equivalent.

$(1)$ As a metric space under the Riemannian metric  $M$ is complete, i.e., 
every Cauchy sequence converges.

$(2)$ There exists a point $x\in M$ from which $M$ is geodesically complete,
i.e.,  $exp_x$ defines on the entire tangent space $T_xM$.

$(3)$ $M$ is geodesically complete.

$(4)$ Every closed bounded subset of $M$ is compact.
\end{theorem}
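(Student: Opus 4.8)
The plan is to establish the cycle of implications $(3)\Rightarrow(2)\Rightarrow(4)\Rightarrow(1)\Rightarrow(3)$, with essentially all the geometric content concentrated in the step $(2)\Rightarrow(4)$. The implication $(3)\Rightarrow(2)$ is immediate: if every maximal geodesic is defined on all of $\mathbb{R}$, then in particular every geodesic issuing from the fixed point $x$ extends to all parameter values, so $\exp_x$ is defined on the whole of $T_xM$.

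The heart of the argument is the classical minimizing-geodesic lemma, which I would prove first: \emph{if $\exp_x$ is defined on all of $T_xM$, then every $y\in M$ can be joined to $x$ by a geodesic of length $d(x,y)$.} To prove it, fix a small radius $\delta<d(x,y)$ and consider the geodesic sphere $S_\delta(x)=\exp_x\{v:|v|=\delta\}$; since it is the continuous image of a compact Euclidean sphere it is compact, so $p\mapsto d(p,y)$ attains its minimum at some $p_0=\exp_x(\delta u)$ with $|u|=1$. Because every sectionally smooth curve from $x$ to $y$ must cross $S_\delta(x)$, the triangle inequality gives $d(x,y)=\delta+d(p_0,y)$. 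Writing $\gamma(t)=\exp_x(tu)$ for the unit-speed geodesic through $p_0$ (defined for all $t\ge 0$ by hypothesis), I would then study the set
\[
A=\{\,t\in[0,d(x,y)]:d(\gamma(t),y)=d(x,y)-t\,\},
\]
which contains $\delta$ by the computation above and is closed by continuity. The decisive point is to show that if $t_0=\sup A<d(x,y)$ then one can push past $t_0$: repeating the sphere-minimization argument in a normal ball about $\gamma(t_0)$ produces a minimizing direction toward $y$, and a first-variation/triangle-inequality comparison forces that direction to be the continuation of $\gamma$, so $t_0+\varepsilon\in A$ for small $\varepsilon>0$. Connectedness of $[0,d(x,y)]$ then yields $\sup A=d(x,y)\in A$, i.e. $\gamma(d(x,y))=y$. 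The main obstacle is precisely this continuation step, where one must verify that the locally minimizing geodesic out of $\gamma(t_0)$ genuinely extends $\gamma$ rather than branching off in a new direction.

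With the lemma in hand the remaining implications are routine. For $(2)\Rightarrow(4)$: a closed bounded set $C$ lies in a closed metric ball $\overline{B}_R(x)$, and the lemma shows $\overline{B}_R(x)=\exp_x\{v:|v|\le R\}$, the continuous image of a compact Euclidean ball, hence compact; the closed subset $C$ is then compact as well. For $(4)\Rightarrow(1)$: any Cauchy sequence is bounded, hence lies in a closed bounded—thus compact—set, so it has a convergent subsequence and therefore converges. Finally $(1)\Rightarrow(3)$: given a maximal unit-speed geodesic $\gamma$ on an interval $[0,b)$ with $b<\infty$, the estimate $d(\gamma(s),\gamma(t))\le|s-t|$ shows $\gamma(t_n)$ is Cauchy as $t_n\to b$, so it converges to a point $p$; local existence of geodesics with a time of existence uniform over a neighborhood of $p$ then extends $\gamma$ beyond $b$, contradicting maximality. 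Hence $b=\infty$, and the same applies in the negative direction, establishing $(3)$ and closing the cycle.
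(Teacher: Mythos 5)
The paper states this result as the classical Hopf--Rinow theorem and gives no proof, simply citing it as standard background (following O'Neill). Your argument is the standard textbook proof---the cycle $(3)\Rightarrow(2)\Rightarrow(4)\Rightarrow(1)\Rightarrow(3)$ with the minimizing-geodesic lemma carrying the weight of $(2)\Rightarrow(4)$---and it is correct, including the delicate continuation step at $t_0=\sup A$, which is rightly resolved by observing that the concatenated curve of length $t_0+\delta'$ is minimizing and hence an unbroken geodesic extending $\gamma$.
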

Furthermore,
\begin{proposition}\label{prop1}
If a connected Riemannian manifold is complete, then any two of its points 
are joined by a minimizing geodesic segment.
\end{proposition}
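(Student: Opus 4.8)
The plan is to follow the classical minimizing-geodesic argument, now using the geodesic completeness supplied by the Hopf--Rinow theorem above. Fix $p, q \in M$ and set $r = d(p,q)$; we seek a unit-speed geodesic $\gamma$ with $\gamma(0) = p$ and $\gamma(r) = q$. Since $M$ is complete, $\exp_p$ is defined on all of $T_pM$, so it suffices to produce a unit vector $v \in T_pM$ with $\exp_p(rv) = q$ such that $t \mapsto \exp_p(tv)$ is minimizing on $[0,r]$.

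First I would choose $\delta > 0$ small enough that the geodesic sphere $S_\delta = \exp_p(\{u : \langle u,u\rangle = \delta^2\})$ is an embedded compact hypersurface on which $\exp_p$ restricts to a diffeomorphism. The continuous function $x \mapsto d(x,q)$ then attains its minimum on $S_\delta$ at a point $x_0 = \exp_p(\delta v)$ with $\langle v,v\rangle = 1$. Because every path from $p$ to $q$ must meet $S_\delta$, the triangle inequality gives $d(p,q) = \delta + d(x_0, q)$, i.e. $d(x_0, q) = r - \delta$. This is the base case of the following claim.

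The heart of the proof is to show that the set
\[
A = \{\, t \in [0,r] : d(\gamma(t), q) = r - t \,\}, \qquad \gamma(t) = \exp_p(tv),
\]
equals all of $[0,r]$. The set $A$ is closed by continuity of $d(\cdot, q)$ and nonempty since $\delta \in A$. The key step is to show that if $t_0 \in A$ with $t_0 < r$, then $t_0 + \delta' \in A$ for some small $\delta' > 0$: choosing a small geodesic sphere $S_{\delta'}$ about $\gamma(t_0)$ and letting $x_0'$ be its closest point to $q$, the same triangle-inequality argument gives $d(x_0', q) = r - t_0 - \delta'$, whence $d(p, x_0') \geq t_0 + \delta'$; but the broken geodesic running $p \to \gamma(t_0) \to x_0'$ has length exactly $t_0 + \delta'$, so it is minimizing and therefore smooth, forcing $x_0' = \gamma(t_0 + \delta')$ and $t_0 + \delta' \in A$. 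A supremum argument over $A$ then yields $r \in A$, so $d(\gamma(r), q) = 0$ and $\gamma(r) = q$; since $\gamma$ has unit speed and length $r = d(p,q)$, it is the desired minimizing segment.

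The main obstacle is the continuation step, and within it the assertion that a distance-realizing broken geodesic has no corner and hence is a genuine (unbroken) geodesic. This is where one must invoke the local fact that sufficiently short geodesics are the unique minimizers between their endpoints (the Gauss-lemma/normal-neighborhood input), so that a minimizing broken geodesic whose break point admits a strictly shorter local shortcut cannot actually be minimizing unless it is smooth across that point.
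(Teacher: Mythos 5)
The paper offers no proof of this proposition: it is stated as part of the classical Hopf--Rinow package and implicitly referred to O'Neill \cite{Ne1}. Your argument is the standard textbook proof (pick the closest point $x_0=\exp_p(\delta v)$ to $q$ on a small geodesic sphere about $p$, show the set $A=\{t: d(\gamma(t),q)=r-t\}$ is closed and propagates forward, and conclude $\gamma(r)=q$), and it is correct; you also correctly identify the one nontrivial local input, namely that a length-minimizing broken geodesic has no corner, which follows from the Gauss lemma or the first variation formula. The only cosmetic points are that one should take $\delta<r$ (so that $q$ lies outside the normal ball and every path from $p$ to $q$ indeed meets $S_\delta$) and dispose of the trivial case $p=q$; neither affects the argument.
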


\begin{definition}
Let $(M,\langle,\rangle _M)$ and $(N,\langle,\rangle _N)$ be 
pseudo-Riemannian manifolds. An {\it isometry} from $M$ to $N$ is a diffeomorphism $\phi: M\rightarrow N$ satisfying $\langle d\phi(v),d\phi(w)\rangle _N=\langle v,w\rangle _M$ for any $v,w\in T_xM$ and $x\in M$.
\hfill $\diamondsuit$\end{definition}

\begin{definition}
A pseudo-Riemannian manifold $(M,\langle,\rangle)$ is {\it homogeneous} if for any points $x$ and $y$ there exists an isometry $\phi$ of $M$ such that $\phi(x)=y$.
\hfill $\diamondsuit$\end{definition}

Let $I(M,\langle,\rangle)$ denote the isometry group of the pseudo-Riemannian 
manifold $(M,\langle,\rangle)$. Then $I(M,\langle,\rangle)$, with the
compact-open topology, is a Lie group.   A pseudo-Riemannian manifold 
$(M,\langle,\rangle)$ is homogeneous if $I(M,\langle,\rangle)$ is transitive on $M$.

\begin{lemma}
A Riemannian homogeneous manifold is complete.  There exist 
homogeneous pseudo-Riemannian manifolds that are not geodesically complete.
\end{lemma}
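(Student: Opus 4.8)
\emph{The statement has two independent parts, and I would treat them separately.} For the first assertion — that a Riemannian homogeneous manifold is complete — the plan is to reduce to geodesic completeness via the Hopf--Rinow theorem stated above (the equivalence of conditions $(1)$ and $(3)$), and then to exploit homogeneity to establish geodesic completeness directly. The key observation is that homogeneity forces a \emph{uniform} lower bound on the domains of unit-speed geodesics. Concretely, fix a base point $p_0$; since the unit sphere $S_{p_0}\subset T_{p_0}M$ is compact and the geodesic flow is defined on an open subset of the unit tangent bundle, there is an $\varepsilon>0$ such that every unit-speed geodesic issuing from $p_0$ is defined at least on $(-\varepsilon,\varepsilon)$. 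For any other point $q$ choose an isometry $\phi$ with $\phi(p_0)=q$; because isometries carry unit vectors to unit vectors and satisfy $\phi\circ\gamma_{p_0,v}=\gamma_{q,d\phi(v)}$, the same $\varepsilon$ works at $q$. Thus a single $\varepsilon>0$ bounds the extension time of every unit-speed geodesic of $M$ from below.

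With the uniform $\varepsilon$ in hand, completeness follows by a standard continuation argument. Suppose some unit-speed geodesic $\gamma$ had maximal domain $(a,b)$ with $b<\infty$. Choosing $t_1\in(a,b)$ with $b-t_1<\varepsilon$, the geodesic determined by the initial data $\bigl(\gamma(t_1),\gamma'(t_1)\bigr)$ is defined on $(t_1-\varepsilon,t_1+\varepsilon)$; by uniqueness of geodesics it agrees with $\gamma$ on the overlap and extends $\gamma$ past $b$, contradicting maximality. Hence $b=+\infty$, and symmetrically $a=-\infty$, so $M$ is geodesically complete and therefore metrically complete by Hopf--Rinow.

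For the second assertion I would exhibit an explicit homogeneous example in which a geodesic has bounded maximal domain; the natural source is a Lie group $G$ with a left-invariant pseudo-Riemannian metric, which is automatically homogeneous under left translations. The simplest candidate is the two-dimensional non-abelian (``$ax+b$'') group, realized as the half-plane $\{(x,y):y>0\}$ with left-invariant coframe $\omega^1=dy/y$, $\omega^2=dx/y$, equipped with the left-invariant Lorentzian metric $g=(\omega^1)^2-(\omega^2)^2=(dy^2-dx^2)/y^2$. Here the plan is to write down the geodesic equations, solve for the null geodesics, and check that their affine parameter ranges over a bounded interval. The Christoffel symbols give $\ddot x=\tfrac{2}{y}\dot x\dot y$ and $\ddot y=\tfrac1y(\dot y^2+\dot x^2)$, from which $\dot x=Ay^2$ and, along a null geodesic, $\dot y=\pm Ay^2$; integrating yields $y(t)=y_0/(1-Ay_0 t)$, which escapes to $y=+\infty$ as $t\to(Ay_0)^{-1}$. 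Thus this geodesic is defined only on a proper subinterval of $\mathbb{R}$, so the homogeneous Lorentzian manifold $(G,g)$ is geodesically incomplete.

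I expect the genuine content, and the main obstacle, to lie entirely in the second part: the first is a textbook packaging of Hopf--Rinow, whereas the second requires producing and verifying a bona fide example. The delicate points there are (i) confirming that the chosen metric really is left-invariant (so that homogeneity is automatic) rather than merely defined on a homogeneous space, and (ii) verifying that the offending geodesic is genuinely inextendible — i.e. that it leaves every compact set in finite affine time — rather than slowing to an asymptotic limit, as happens for the complete Riemannian hyperbolic metric $(dx^2+dy^2)/y^2$ on the same domain. The change of sign in the $\omega^2$ direction is exactly what alters the asymptotic behavior of the null curves and produces the finite-parameter blow-up.
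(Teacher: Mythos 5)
The paper offers no proof of this lemma at all: it is listed among the ``basic facts \dots following \cite{Ne1}'' in Section \ref{prm}, so there is nothing internal to compare your argument against. Your proposal is nevertheless correct and supplies exactly what a self-contained treatment would need. The first part is the standard argument: compactness of the unit sphere in $T_{p_0}M$ gives a uniform $\varepsilon>0$ for geodesics emanating from $p_0$, homogeneity transports that $\varepsilon$ to every point, the continuation argument yields geodesic completeness, and Hopf--Rinow converts this to metric completeness. The second part checks out as well: the metric $(dy^2-dx^2)/y^2$ on the upper half-plane is invariant under $(x,y)\mapsto(ax+b,ay)$, which acts transitively, so the space is homogeneous; the conserved momentum $g(\dot\gamma,\partial_x)=-\dot x/y^2$ gives $\dot x=Ay^2$, the null condition gives $\dot y=\pm Ay^2$, and for $A\neq 0$ (i.e.\ for every nonconstant null geodesic) the solution $y(t)=y_0/(1-Ay_0t)$ leaves every compact set as $t\to (Ay_0)^{-1}$, so the geodesic is genuinely inextendible rather than merely asymptotic --- precisely the point you flagged. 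Your closing assessment is also accurate: the content lies in the example, and your two ``delicate points'' (left-invariance versus mere homogeneity of the underlying space, and inextendibility versus asymptotic slowdown as in the Riemannian Poincar\'e metric) are the right things to verify. One cosmetic remark: since the incomplete geodesics here are null, it is worth a sentence noting that incompleteness of a single inextendible geodesic on a bounded parameter interval already negates geodesic completeness as defined in the paper, regardless of causal character.
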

Together with Proposition~\ref{prop1}, we have:
\begin{lemma}\label{lem3}
Any two points of a connected Riemannian homogeneous manifold is joined by 
a minimizing geodesic segment.  This fails for some pseudo-Riemannian manifolds.
\end{lemma}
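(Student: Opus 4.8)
The plan is to handle the two assertions separately: the first is an immediate synthesis of results already in hand, while the second requires an explicit counterexample. For the first, I would chain the two facts just recorded. The preceding lemma gives that a Riemannian homogeneous manifold is complete, and Proposition~\ref{prop1} gives that on a connected complete Riemannian manifold any two points are joined by a minimizing geodesic segment. A connected Riemannian homogeneous manifold satisfies the hypotheses of both, so the conclusion follows at once; no further work is needed here.

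For the second assertion I would produce a connected homogeneous pseudo-Riemannian manifold together with a pair of points that \emph{no} geodesic segment joins (a fortiori no minimizing one, noting that ``minimizing'' has no literal meaning once the metric is indefinite). The most economical source is a semisimple Lie group $G$ carrying the bi-invariant pseudo-Riemannian metric $\langle,\rangle$ induced by its Killing form, which is nondegenerate of indefinite signature. Left translations are isometries, so $(G,\langle,\rangle)$ is homogeneous (in fact symmetric, with geodesic symmetry $g\mapsto g^{-1}$ at $e$), and by homogeneity it suffices to exhibit a point not joined to $e$ by a geodesic. Because a bi-invariant metric has Levi-Civita connection $\nabla_X Y=\tfrac12[X,Y]$ on left-invariant fields, the geodesics through $e$ are exactly the one-parameter subgroups $t\mapsto\exp(tX)$; hence the set of points joined to $e$ by a geodesic is precisely the image of the Lie exponential $\exp\colon\mathfrak{g}\to G$, and it is enough to choose $G$ for which this map is not surjective. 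I would take $G=SL(2,\mathbb{R})$, whose Killing form has signature $(2,1)$: a trace computation shows each $\exp(X)$ with $X\in\mathfrak{sl}(2,\mathbb{R})$ has trace $\ge-2$, with equality forcing $\exp(X)=-I$, so an element such as $\left(\begin{smallmatrix}-1&1\\0&-1\end{smallmatrix}\right)$, of trace $-2$ but unequal to $-I$, lies outside the image and is joined to $e$ by no geodesic. This also realizes the class of examples promised in Example~\ref{no-join}.

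The first assertion is essentially free, so all the substance lies in the second, where I expect the only real friction. One must check that the bi-invariant form is genuinely nondegenerate, so that $(G,\langle,\rangle)$ is a bona fide pseudo-Riemannian manifold; that the geodesics issuing from $e$ are exactly the one-parameter subgroups, via the standard bi-invariant connection computation; and, the crux, that the chosen element truly lies outside the image of $\exp$, which is settled by the eigenvalue analysis of $\mathfrak{sl}(2,\mathbb{R})$ sketched above.
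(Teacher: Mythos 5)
Your proof is correct and follows essentially the same route as the paper: the first assertion is obtained exactly as the paper does, by combining completeness of Riemannian homogeneous manifolds with Proposition~\ref{prop1}, and the second is witnessed by a semisimple group with bi-invariant Killing-form metric whose exponential map is not surjective, which is precisely the class of examples the paper records in Example~\ref{no-join}. The only difference is cosmetic: you take $SL(2,\mathbb{R})$ and verify non-surjectivity of $\exp$ by a self-contained trace argument, whereas the paper takes $SL(3,\mathbb{R})$ and cites the literature; your version has the minor advantage of being fully explicit.
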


\begin{lemma}\label{lem2}
If a Lie group acts transitively on a connected manifold, then so does 
the identity component of the Lie group.  In particular if
$(M,\langle,\rangle)$ is a connected homogeneous pseudo-Riemannian
manifold then the identity component $I(M,\langle,\rangle)^0$ of its
isometry group is transitive.
\end{lemma}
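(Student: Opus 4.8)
The plan is to first prove the purely group-theoretic assertion and then specialize. So let $G$ be a Lie group acting smoothly and transitively on a connected manifold $M$, fix a base point $x_0\in M$, and write $G^0$ for the identity component. The core idea is that the $G^0$-orbits partition $M$ into \emph{open} subsets, after which connectedness of $M$ forces a single orbit and hence transitivity of $G^0$.

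To carry this out, I would first record that $G^0$ is an open normal subgroup of $G$, so that $G$ decomposes as a disjoint union $G=\bigsqcup_\alpha g_\alpha G^0$ of open cosets. Next I would use that the orbit map $\pi\colon G\to M$, $g\mapsto g\cdot x_0$, is an \emph{open} map: for a transitive smooth action the orbit theorem identifies $M$ with the homogeneous space $G/G_{x_0}$ in such a way that $\pi$ becomes the canonical projection, which is open. Applying $\pi$ to the coset decomposition gives $M=\pi(G)=\bigcup_\alpha \pi(g_\alpha G^0)$, where each $\pi(g_\alpha G^0)=g_\alpha\cdot(G^0\cdot x_0)$ is open. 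Since $G^0$ is normal, $g_\alpha\cdot(G^0\cdot x_0)=G^0\cdot(g_\alpha\cdot x_0)$, so these sets are precisely the $G^0$-orbits; being orbits they are mutually disjoint and cover $M$.

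With the orbit partition into open sets in hand, the conclusion is soft: the complement of any single $G^0$-orbit is the union of the remaining (open) orbits, hence open, so each orbit is both open and closed. Because $M$ is connected and the orbits are nonempty, there can be only one of them, i.e. $G^0\cdot x_0=M$. Thus $G^0$ is transitive, which proves the general statement. For the stated application I would simply take $G=I(M,\langle,\rangle)$, which is a Lie group acting smoothly and, by homogeneity, transitively on the connected manifold $M$; the general result then yields transitivity of $G^0=I(M,\langle,\rangle)^0$.

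The one step carrying genuine content is the openness of the orbit map $\pi$, which is where the Lie group structure and the smoothness of the action are essential; in the purely topological category the analogous claim can fail. Everything downstream—the coset decomposition and the clopen/connectedness argument—is routine, so I expect the openness of orbits to be the main (and essentially only) obstacle.
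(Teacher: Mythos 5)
Your argument is correct, and in fact the paper states Lemma~\ref{lem2} without any proof at all, so there is nothing to compare it against; what you have written is the standard argument one would expect to be tacitly invoked. The decomposition of $G$ into open cosets of the open normal subgroup $G^0$, the openness of the orbit map, and the clopen/connectedness step are all sound. You correctly identify the openness of $\pi\colon G\to M$ as the only step with real content; just be aware that this step uses the standing convention that Lie groups and manifolds are second countable (equivalently, that $G$ has at most countably many components), since the identification $M\cong G/G_{x_0}$ as topological spaces rests on a Baire category argument that fails for, say, an uncountable discrete group acting transitively. For the isometry group of a connected pseudo-Riemannian manifold this hypothesis is automatic, so the specialization in the second sentence of the lemma goes through exactly as you say.
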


\section{Geometry of pseudo-Riemannian weakly symmetric manifolds}
\label{geom}

There are a number of equivalent conditions that can be taken as the
definition of weak symmetry for a Riemannian manifold.  The one on
reversing geodesics is also appropriate in the pseudo-Riemannian case.
\begin{definition}
Let $(M,\langle,\rangle)$ be a pseudo-Riemannian manifold. Suppose that 
for every $x\in M$ and every nonzero tangent vector $\xi\in T_xM$, there 
is an isometry $\phi=\phi_{x,\xi}$ of $(M,\langle,\rangle)$ such that 
$\phi(x)=x$ and $d\phi(\xi)=-\xi$. Then we say that $(M,\langle,\rangle)$ 
is a {\it pseudo-Riemannian weakly symmetric manifold}. In particular, 
if $(M,\langle,\rangle)$ is a Riemannian manifold, then we say that 
$(M,\langle,\rangle)$ is weakly symmetric.
\hfill $\diamondsuit$\end{definition}

Of course the symmetric case is the case where each $\phi_{x,\xi}$ is
independent of $\xi$, in other words where for any $x\in M$ the geodesic 
reflection at $x$ extends to a globally defined isometry of $M$. 
Equivalently, $(M,\langle,\rangle)$ is a pseudo-Riemannian symmetric space 
if for any $x\in M$ there is an involutive isometry $\theta_x$ of $M$
that has $x$ as an isolated fixed point.  

As in the Riemannian case we have

\begin{proposition}\label{homog}
Let $(M,\langle,\rangle)$ be a connected pseudo-Riemannian weakly symmetric 
manifold.  Then $(M,\langle,\rangle)$ is a pseudo-Riemannian homogeneous 
space $G/H$ where $G=I(M,\langle,\rangle)^0$.
\end{proposition}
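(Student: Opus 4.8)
The plan is to prove homogeneity by showing that every orbit of the full isometry group $I = I(M,\langle,\rangle)$ is open, and then to pass to the identity component. Since the orbits of $I$ partition $M$ and each is open, each orbit is also closed, being the complement of the union of the remaining open orbits; as $M$ is connected, there can be only one orbit, so $I$ is transitive and $M$ is homogeneous. Lemma~\ref{lem2} then upgrades this to transitivity of $G = I^0$, and the isotropy subgroup $H = G_x$ at any point $x$ exhibits $M$ as the coset space $G/H$. Thus the entire argument reduces to the local statement that $I\cdot x$ contains a neighborhood of $x$ for each $x\in M$.

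For that local statement I would use the geodesic-reversing property built into weak symmetry. Fix $x\in M$. Because geodesics exist and are unique for given initial data, and because $d(\exp_x)_0$ is the identity, the exponential map $\exp_x$ is defined on some neighborhood $U$ of $0\in T_xM$ and restricts there to a diffeomorphism onto a neighborhood of $x$; this holds for any signature and needs no completeness. Given $\xi\in U$ with $\tfrac12\xi,\ \xi \in U$, let $\gamma$ be the geodesic with $\gamma(0)=x$ and $\dot\gamma(0)=\xi$, let $m=\gamma(\tfrac12)=\exp_x(\tfrac12\xi)$ be its midpoint, and let $\eta=\dot\gamma(\tfrac12)$. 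The weak symmetry $\phi=\phi_{m,\eta}$ satisfies $\phi(m)=m$ and $d\phi(\eta)=-\eta$; since isometries carry geodesics to geodesics, $\phi$ maps $\gamma$ to the geodesic with the same value $m$ and reversed velocity at $t=\tfrac12$, so by uniqueness $\phi(\gamma(\tfrac12+s))=\gamma(\tfrac12-s)$. Evaluating at $s=\tfrac12$ gives $\phi(\gamma(1))=\gamma(0)=x$, hence $\phi^{-1}(x)=\exp_x(\xi)$ with $\phi^{-1}\in I$. As $\xi$ ranges over a neighborhood of $0$, the points $\exp_x(\xi)$ fill a neighborhood of $x$, each lying in the orbit $I\cdot x$; this proves that the orbit is open.

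The step I expect to require the most care is precisely the one that distinguishes the pseudo-Riemannian setting from the Riemannian one: one must not invoke a Hopf--Rinow-type statement or the existence of minimizing geodesics joining arbitrary pairs of points, since these can fail here (compare Proposition~\ref{prop1}, Lemma~\ref{lem3}, and Example~\ref{no-join}). The argument above sidesteps this difficulty by being entirely local around $x$: it uses only the local existence and uniqueness of geodesics and the fact that $\exp_x$ is a local diffeomorphism at the origin, both of which remain valid for indefinite metrics. Consequently no geodesic completeness hypothesis is needed, and the proof is as short as in the Riemannian case.
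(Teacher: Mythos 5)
Your proof is correct, and the central mechanism is the same as the paper's: apply the weak symmetry at the midpoint of a geodesic segment, with velocity reversed there, to interchange the two endpoints. Where you differ is in how you globalize this local endpoint-swap. The paper invokes Lemma~\ref{lem1} (any two points of a connected manifold are joined by a broken geodesic), chains the midpoint reflections along the segments $o'_io'_{i+1}$, and directly exhibits an isometry $g_{p-1}\cdots g_1$ carrying $x$ to $y$; your argument instead shows that each orbit $I\cdot x$ contains a normal neighborhood of $x$, hence is open, and concludes by the standard open--closed partition argument on the connected manifold $M$. Both routes are equally valid and of comparable length. The paper's version is more constructive (it produces the transporting isometry explicitly) but leans on the broken-geodesic connectivity lemma; yours is purely local, needing only the local existence and uniqueness of geodesics and that $\exp_x$ is a local diffeomorphism at $0$, and so makes completely explicit --- as you note --- that no Hopf--Rinow-type input or global geodesic connectivity is being used. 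One small point to keep visible: the weak symmetry is only postulated for \emph{nonzero} tangent vectors, so you should note that $\eta=\dot\gamma(\tfrac12)\neq 0$ whenever $\xi\neq 0$ (parallel transport preserves nonvanishing of the velocity), the case $\xi=0$ being trivial; this is implicit in your write-up and in the paper's.
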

\begin{proof}
For any $x,y\in M$, by Lemma~\ref{lem1}, there exists a broken geodesic 
connecting $x$ and $y$. Assume that the broken geodesic is 
$o'_1o'_2\cdots o'_p$, where $o'_io'_{i+1}$ is a geodesic segment for any 
$1\leq i\leq p-1$, $o'_1=x$ and $o'_p=y$. Let $\gamma_i$ denote the geodesic 
arc from $o'_i$ to $o'_{i+1}$ parameterized from $0$ to $1$ proportional 
to the arc length for any $i$. Since $(M,\langle,\rangle)$ is weakly 
symmetric, we have that there exists $g_i\in I(M,\langle,\rangle)$ such 
that $g_i\gamma_i(\frac{1}{2})=\gamma_i(\frac{1}{2})$ and 
$dg_i\gamma_i^\prime(\frac{1}{2})=-\gamma_i^\prime(\frac{1}{2})$.  
Then $g_i$ interchanges $o'_i$ and $o'_{i+1}$. Thus $g_{p-1}\cdots g_1(x)=y$. 
So $I(M,\langle,\rangle)$ is transitive on $M$. By Lemma~\ref{lem2}, 
$I(M,\langle,\rangle)^0$ acts transitively on $M$. So
$(M,\langle,\rangle)$ is homogeneous.
\end{proof}

Let $(M,\langle,\rangle)$ be a connected Riemannian weakly symmetric manifold. 
It is homogeneous, hence complete, so any two points in $M$ are 
connected by a geodesic segment. Thus for any $x,y\in M$ there exists an 
involutive isometry (the geodesic symmetry at the midpoint of that geodesic
segment) that interchanges $x$ and $y$.  In the pseudo-Riemannian case,
although we know that $(M,\langle,\rangle)$ is homogeneous, there might
not be a geodesic joining any two points.  So we only have 

\begin{proposition}\label{interchange}
Let $(M,\langle,\rangle)$ be a connected pseudo-Riemannian manifold. 
If $(M,\langle,\rangle)$ is weakly symmetric, then for any $x,y\in M$ 
connected by a geodesic, there exists an isometry which interchanges 
$x$ and $y$. In particular if $(M,\langle,\rangle)$ is Riemannian, then for any $x,y\in M$ there exists an isometry which interchanges $x$ and $y$.
\end{proposition}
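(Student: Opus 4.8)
The plan is to mimic the midpoint construction already used in the proof of Proposition~\ref{homog}, but now applied to the single geodesic joining $x$ and $y$ rather than to the legs of a broken geodesic. If $x=y$ the identity isometry suffices, so assume $x\neq y$ and let $\gamma\colon[0,1]\to M$ be a geodesic, affinely parameterized, with $\gamma(0)=x$ and $\gamma(1)=y$. In the pseudo-Riemannian setting I would deliberately avoid arc-length parameterization, since $\gamma$ may be null; an affine parameter is all that is needed. Set $m=\gamma(\tfrac12)$ and $\xi=\gamma'(\tfrac12)$. Because $\gamma$ is a nonconstant geodesic, $\xi\neq 0$, so the weak symmetry hypothesis applies: there is an isometry $\phi=\phi_{m,\xi}$ with $\phi(m)=m$ and $d\phi(\xi)=-\xi$.

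The key step is to show that $\phi$ reverses $\gamma$, that is, $\phi(\gamma(t))=\gamma(1-t)$ for all $t\in[0,1]$. First I would note that $\phi\circ\gamma$ is again a geodesic, since isometries carry geodesics to geodesics, and that it satisfies $(\phi\circ\gamma)(\tfrac12)=m$ and $(\phi\circ\gamma)'(\tfrac12)=d\phi(\xi)=-\xi$. On the other hand, the reparameterized curve $\sigma(t)=\gamma(1-t)$ is also a geodesic, with $\sigma(\tfrac12)=m$ and $\sigma'(\tfrac12)=-\gamma'(\tfrac12)=-\xi$. Thus $\phi\circ\gamma$ and $\sigma$ are geodesics agreeing in both position and velocity at the parameter value $\tfrac12$. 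By uniqueness of geodesics with prescribed initial data they coincide, so $\phi(\gamma(t))=\gamma(1-t)$. Evaluating at $t=0$ and $t=1$ gives $\phi(x)=y$ and $\phi(y)=x$, so $\phi$ interchanges $x$ and $y$, proving the first assertion.

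For the final (Riemannian) assertion I would simply invoke the chain of facts already recorded in the excerpt: a connected Riemannian weakly symmetric manifold is homogeneous by Proposition~\ref{homog}, hence complete, so by Proposition~\ref{prop1} (equivalently Lemma~\ref{lem3}) any two of its points are joined by a minimizing geodesic segment. The hypothesis of the first part is therefore automatically satisfied for every pair $x,y\in M$, and the conclusion follows at once.

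I do not expect a serious obstacle here, since the argument is essentially the midpoint reflection already deployed for homogeneity. The only points demanding care are the parameterization and the nonvanishing of $\xi$: in the pseudo-Riemannian case one must use an affine rather than an arc-length parameter so that null geodesics are handled uniformly, and one must observe $\xi\neq 0$ so that the weak symmetry definition, stated only for nonzero tangent vectors, genuinely applies. Uniqueness of geodesics with given initial position and velocity is the standard ODE fact and needs no separate justification.
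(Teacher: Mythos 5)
Your proposal is correct and is essentially the paper's own argument: the paper justifies this proposition (implicitly, via the discussion preceding it and the identical midpoint-reversal step in the proof of Proposition~\ref{homog}) by applying the weak symmetry $\phi_{m,\xi}$ at the midpoint $m=\gamma(\tfrac12)$ with $\xi=\gamma'(\tfrac12)$ and using uniqueness of geodesics to conclude that $\phi$ reverses $\gamma$ and hence swaps the endpoints, with the Riemannian case following from homogeneity and completeness exactly as you say. Your explicit attention to using an affine parameter (so that null geodesics are covered) and to $\xi\neq 0$ is a careful spelling-out of details the paper leaves tacit, not a different route.
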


\begin{exam}\label{no-join}
Here is a class of pseudo-Riemannian symmetric manifolds that have 
points which cannot be joined by a single geodesic.  Let $G$ be a semisimple 
Lie group such that the exponential map $exp: Lie(G)\rightarrow G$ is not 
surjective, and let $x$ be a point of $G$ that is not in the image of 
$exp: Lie(G)\rightarrow G$. Such Lie groups exist, for example if 
$G = SL(3;R)$, see \cite{DT1}. Use the Killing form for the 
pseudo-Riemannian metric. Then $G$ should be a pseudo-Riemannian symmetric 
space, where the symmetry at the identity element is $g \mapsto g^{-1}$ , 
and the geodesics are the group translates of the orbits of one-parameter 
subgroups. See Theorem \ref{thm3} below.  
This shows that there is no geodesic from the identity element to the point $x$.
\hfill $\diamondsuit$\end{exam}

\begin{definition}
Let $G$ be a connected Lie group and $H$ be a closed subgroup. Suppose 
that $\sigma$ is an automorphism of $G$ such that $\sigma(g)\in Hg^{-1}H$ 
for every $g\in G$. Then we say that $G/H$ is a {\it weakly symmetric coset 
space}, that $(G,H)$ is a {\it weakly symmetric pair}, and that 
$\sigma$ is a {\it weak symmetry} of $G/H$.
\hfill $\diamondsuit$\end{definition}

Consider the case where $H$ is a compact subgroup. The next proposition 
recalls the relation between group-theoretic notion and differential-geometric 
notions of weak symmetry. There the condition of $g(x)=\phi(y)$ and 
$g(y)=\phi(x)$ corresponds to the interchange condition between pairs of points.

\begin{proposition}[\cite{Wo1}]\label{coset-riem}
Let $G$ be a connected Lie group, $K$ be a compact subgroup and $M=G/K$.

$(1)$ $G/K$ is a weakly symmetric coset space if and only if there is a 
diffeomorphism $\phi$ of $M$ such that $\phi G\phi^{-1}=G$ and 
if $x,y\in M$ there exists $g\in G$ with $g(x)=\phi(y)$ and $g(y)=\phi(x)$.

$(2)$ Assume that the above equivalent conditions hold. Then every 
$G$-invariant Riemannian metric on $M$ is $\phi$-invariant and weakly 
symmetric. In particular there exists a $G$-invariant $\phi$-invariant 
Riemannian metric on $M$.
\end{proposition}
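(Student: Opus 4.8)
The plan is to handle the two implications of (1) by translating the double-coset condition on $\sigma$ into the geometric interchange condition and back, and then to deduce (2) from the interchange condition together with the Myers--Steenrod theorem. For the forward implication of (1), I start from an automorphism $\sigma$ of $G$ with $\sigma(g)\in Kg^{-1}K$ for all $g$. Taking $g=k\in K$ gives $\sigma(k)\in Kk^{-1}K=K$, so $\sigma(K)\subseteq K$; since $\sigma$ is a diffeomorphism of $G$, the image $\sigma(K)$ is a closed subgroup of $K$ of full dimension meeting every component, hence $\sigma(K)=K$. Thus $\sigma$ descends to a diffeomorphism $\phi$ of $M=G/K$ via $\phi(gK)=\sigma(g)K$, and $\phi g\phi^{-1}=\sigma(g)$ as a transformation of $M$, so $\phi G\phi^{-1}=G$. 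For the interchange condition I write $x=g_{1}K$, $y=g_{2}K$ and put $u=g_{1}^{-1}g_{2}$; the hypothesis $\sigma(u)\in Ku^{-1}K$ supplies $k\in K$ with $\sigma(u)ku\in K$, and then $h=\sigma(g_{2})kg_{1}^{-1}\in G$ satisfies $h(x)=\sigma(g_{2})K=\phi(y)$ and $h(y)=\sigma(g_{2})kuK=\sigma(g_{1})K=\phi(x)$, which is a direct coset computation.

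For the converse I first normalize $\phi$. Applying the interchange condition to $x=y=o$, where $o=eK$, yields $g_{0}\in G$ with $g_{0}(o)=\phi(o)$; replacing $\phi$ by $g_{0}^{-1}\phi$ preserves $\phi G\phi^{-1}=G$ and the interchange condition (replacing each interchanging element $h$ by $g_{0}^{-1}h$) while arranging $\phi(o)=o$. Conjugation $\sigma=c_{\phi}$ is then an automorphism of $G$, and because $\phi$ fixes $o$ one gets $\phi(gK)=\sigma(g)K$ together with $\sigma(K)=K$. Feeding $x=o$, $y=gK$ into the interchange condition produces $h\in \sigma(g)K\cap Kg^{-1}$, so $\sigma(g)=k_{2}g^{-1}k_{1}^{-1}\in Kg^{-1}K$ for some $k_{1},k_{2}\in K$, and $(G,K)$ is a weakly symmetric pair. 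I expect this normalization step to be the main obstacle, the delicate point being effectiveness: the conjugation $c_{\phi}$ manifestly acts on the image of $G$ in the diffeomorphism group, and to read it as an automorphism of $G$ itself one must either assume $G$ acts (almost) effectively on $M$ or pass to the kernel $N=\bigcap_{g}gKg^{-1}$ and argue on $G/N$; everything else is routine double-coset bookkeeping.

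For (2), a $G$-invariant Riemannian metric $\langle,\rangle$ exists because $K$ is compact: I average any inner product on $T_{o}M$ over $K$ with respect to Haar measure and transport it by the $G$-action. Given such a metric with distance function $d$, the interchange condition furnishes, for any $x,y\in M$, an element $g\in G$ with $g(x)=\phi(y)$ and $g(y)=\phi(x)$, so that $d(\phi(x),\phi(y))=d(g(y),g(x))=d(y,x)=d(x,y)$ since $g$ is an isometry. Hence $\phi$ preserves $d$ and is therefore a Riemannian isometry by the Myers--Steenrod theorem; as this argument does not depend on the chosen metric, every $G$-invariant metric is $\phi$-invariant.

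It remains to check weak symmetry. Since $\phi$ and every $g\in G$ are now isometries, $s=\phi^{-1}g$ is an isometry interchanging $x\leftrightarrow y$ in the ordinary sense. Given $x\in M$ and a nonzero $\xi\in T_{x}M$, I take the geodesic $\gamma$ with $\gamma(0)=x$, $\gamma'(0)=\xi$, which exists since $M$ is homogeneous and hence complete, and apply the interchange to its endpoints $\gamma(-\epsilon)$ and $\gamma(\epsilon)$ for small $\epsilon$. The resulting isometry fixes the midpoint $x$ and, by uniqueness of short minimizing geodesics, carries $\gamma(t)$ to $\gamma(-t)$, so its differential at $x$ sends $\xi$ to $-\xi$. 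This is exactly the weak symmetry $\phi_{x,\xi}$ required by the definition, completing (2).
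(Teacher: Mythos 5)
The paper does not prove this proposition; it is quoted from the reference [Wo1], so there is no in-paper argument to compare against. Your proof is correct and follows the standard route of that reference: the double-coset computation translating $\sigma(u)\in Ku^{-1}K$ into the interchange condition and back (your coset bookkeeping with $u=g_1^{-1}g_2$ and $h=\sigma(g_2)kg_1^{-1}$ checks out), averaging over the compact group $K$ to produce an invariant metric, Myers--Steenrod to upgrade the distance-preserving map $\phi$ to a Riemannian isometry, and the midpoint argument with $s=\phi^{-1}g$ applied to $\gamma(-\epsilon),\gamma(\epsilon)$ inside a totally normal neighborhood to obtain the weak symmetry $\phi_{x,\xi}$. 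The one genuine subtlety is the one you flag yourself: reading the conjugation $c_\phi$ as an automorphism of $G$ rather than of its image in the diffeomorphism group requires the action to be (almost) effective or a passage to $G/\bigcap_g gKg^{-1}$; this is handled by convention in [Wo1], and it is to your credit that you identified it, since the paper's own proof of the analogous Proposition \ref{coset-p-riem}(3) silently elides the same point.
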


More generally, we have
\begin{proposition}\label{coset-p-riem}
Let $G$ be a connected Lie group, $H$ be a closed subgroup and $M=G/H$.

$(1)$ If there is a diffeomorphism $\phi$ of $M$ such that 
$\phi G\phi^{-1}=G$ and if $x,y\in M$ there exists $g\in G$ with 
$g(x)=\phi(y)$ and $g(y)=\phi(x)$, then $G/H$ is a weakly symmetric coset space.

$(2)$ If $G/H$ is a weakly symmetric coset space, then there is a 
diffeomorphism $\phi$ of $M$ such that $\phi G\phi^{-1}=G$ and if 
$x,y\in M$ connecting by a geodesic there exists $g\in G$ with 
$g(x)=\phi(y)$ and $g(y)=\phi(x)$. Moreover, every $G$-invariant 
pseudo-Riemannian metric on $M$ is $\phi$-invariant and weakly symmetric.

$(3)$ If there is a diffeomorphism $\phi$ of $M$ such that 
$\phi G\phi^{-1}=G$ and if $x,y\in M$ connecting by a geodesic there exists 
$g\in G$ with $g(x)=\phi(y)$ and $g(y)=\phi(x)$, then for every $g\in G$ 
there is $h\in G$ and an automorphism $\sigma$ of $G$ such that 
$\sigma(h^{-1}gh)\in Hh^{-1}g^{-1}hH$.
\end{proposition}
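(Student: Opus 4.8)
The plan is to organize all three parts around a single dictionary between a diffeomorphism $\phi$ of $M$ normalizing the transformation group $G$ and an automorphism $\sigma$ of $G$. Writing $o=eH$ for the base point, so that $H$ is the stabilizer of $o$ and $G$ acts transitively, the correspondence is: given $\phi$ with $\phi G\phi^{-1}=G$, set $\sigma(g):=\phi\circ g\circ\phi^{-1}$ (reading $g$ as the transformation it induces); conversely, given $\sigma$ with $\sigma(H)=H$, set $\phi(gH):=\sigma(g)H$. In either direction $\phi(g\cdot o)=\sigma(g)\cdot\phi(o)$ and, more generally, $\phi\circ a=\sigma(a)\circ\phi$ as transformations. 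A normalization I would use throughout is that \emph{we may assume $\phi(o)=o$}: choosing $g_0\in G$ with $g_0\cdot\phi(o)=o$ and replacing $\phi$ by $g_0\phi$ keeps $\phi G\phi^{-1}=G$ (it conjugates $\sigma$ by $g_0$) and preserves every interchange hypothesis, since if $g$ realizes $gx=\phi y,\ gy=\phi x$ then $g_0g$ does so for $g_0\phi$; geodesic connectivity of a pair is a property of the metric on $M$ and is untouched. With $\phi(o)=o$ one checks at once that $\sigma(H)=H$.

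For (1), after the normalization set $\sigma(g)=\phi g\phi^{-1}$ and fix $g\in G$. I would apply the interchange hypothesis to the pair $(o,\ g\cdot o)$: there is $c\in G$ with $c\cdot o=\phi(g\cdot o)=\sigma(g)\cdot o$ and $c\cdot(g\cdot o)=\phi(o)=o$. The second identity gives $cg\in H$, i.e. $c\in Hg^{-1}$, while the first gives $c\in\sigma(g)H$; hence $\sigma(g)\in Hg^{-1}H$, and since $g$ was arbitrary $\sigma$ exhibits $G/H$ as a weakly symmetric coset space.

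For (2), to build $\phi$ from $\sigma$ I would first verify $\sigma(H)=H$ (if $h\in H$ then $\sigma(h)\in Hh^{-1}H=H$, and applying the relation to $\sigma^{-1}(h)$ gives $\sigma^{-1}(h)\in H$), so that $\phi(gH):=\sigma(g)H$ is a well-defined diffeomorphism fixing $o$; a direct check gives $\phi\circ g\circ\phi^{-1}=\sigma(g)$, whence $\phi G\phi^{-1}=G$. For the interchange statement, given $(o,\ c\cdot o)$ the membership $\sigma(c)\in Hc^{-1}H$ produces $g\in\sigma(c)H\cap Hc^{-1}$, and this $g$ satisfies $g\cdot o=\sigma(c)\cdot o=\phi(c\cdot o)$ and $g\cdot(c\cdot o)=o=\phi(o)$; an arbitrary pair $(a\cdot o,\ y)$ reduces to this one via $\tilde g=\sigma(a)\,g\,a^{-1}$ using $\phi\circ a=\sigma(a)\circ\phi$, so in fact the interchange holds for \emph{all} pairs, hence a fortiori for those joined by a geodesic. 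For the metric assertion, let $\langle\,,\rangle$ be a $G$-invariant metric and $\xi\in T_oM$ nonzero. Writing $\sigma(g)=k(g)\,g^{-1}\,l(g)$ with $k(g),l(g)\in H$ and evaluating on $o$ gives $\phi(g\cdot o)=k(g)\cdot(g^{-1}\cdot o)$. Specializing to $g=\exp(tX)$ with $\tfrac{d}{dt}\big|_{0}\exp(tX)\cdot o=\xi$ and differentiating at $t=0$—for which one needs a sufficiently regular choice of the factor $k(\exp tX)$, a point where the possible noncompactness of the isotropy is felt—and using that $t\mapsto k(\exp tX)\cdot o\equiv o$, one finds $d\phi_o(\xi)=-\,d(k_0)_o\,\xi$ for some $k_0\in H$ depending on $\xi$. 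Since $k_0$ acts on $T_oM$ as a linear isometry, $\langle d\phi_o\xi,d\phi_o\xi\rangle=\langle\xi,\xi\rangle$, so by polarization $d\phi_o$ preserves $\langle\,,\rangle_o$; homogeneity together with $\phi\circ a=\sigma(a)\circ\phi$ then makes $\phi$ a global isometry, i.e. the metric is $\phi$-invariant. Finally the isometry $k_0^{-1}\cdot\phi$ fixes $o$ and reverses $\xi$, and as $o$ and $\xi$ are arbitrary, $(M,\langle\,,\rangle)$ is weakly symmetric.

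For (3) I would again put $\sigma=\phi(\cdot)\phi^{-1}$ and normalize $\phi(o)=o$. Applying the \emph{geodesic} interchange hypothesis to a pair $(a\cdot o,\ b\cdot o)$ joined by a geodesic and reading off cosets exactly as in (1) yields $\sigma(a^{-1}b)\in H(a^{-1}b)^{-1}H$; equivalently, the relation $\sigma(g')\in H g'^{-1}H$ holds for every $g'$ such that $o$ and $g'\cdot o$ are joined by a geodesic, which already covers a neighborhood of $e$. It then remains, given arbitrary $g\in G$, to produce a conjugate $g'=h^{-1}gh$ to which the relation applies; translating by $h$, this is precisely the requirement that $p$ and $g\cdot p$ be joined by a geodesic for some $p=h\cdot o$, which is the form recorded in the statement. \emph{This last step is the main obstacle.} In the pseudo-Riemannian setting geodesic connectivity can fail for \emph{every} $p$ (compare Example~\ref{no-join}, where some group translates are not geodesically reachable), so the relation cannot be extracted from the interchange hypothesis alone; one must instead exploit the globally defined $\phi$, equivalently the automorphism $\sigma$ and its induced inversion on the double-coset space $H\backslash G/H$, to extend the relation from the geodesically connected locus to a conjugate of a general element. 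Controlling this extension for every conjugacy class—and thereby pinning down the conjugator $h$—is where the argument is delicate and where the pseudo-Riemannian case genuinely departs from the Riemannian one.
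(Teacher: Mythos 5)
Your parts (1) and (2) are essentially the argument the paper intends: for these two assertions the paper simply invokes ``the proof for the Riemannian case,'' and your dictionary $\phi\leftrightarrow\sigma$ after normalizing $\phi(o)=o$, the double-coset bookkeeping in (1), and the construction of $\phi$ from $\sigma$ in (2) are all correct. Your observation that in (2) the interchange in fact holds for \emph{all} pairs, not only geodesically connected ones, is accurate, and the smooth-selection issue for $k(\exp tX)$ that you flag in the metric computation is a genuine subtlety, though it is the same one already present in the Riemannian argument that the paper cites rather than reproves.

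The genuine gap is in (3), and you have located it yourself without closing it: you never produce the conjugator $h$. The missing idea is local, not global. Every point of a pseudo-Riemannian manifold has a convex normal neighborhood, inside which any two points are joined by a geodesic; so the issue is not whether $1H$ and $g'H$ can be joined by a geodesic for $g'$ far from the identity, but only whether one can choose a base point $p=g_1H$ with $g\cdot p$ close to $p$. That is exactly what the paper does: it takes $g'H$ to be a fixed point of $g$ on $G/H$ and $U$ a normal neighborhood of $g'H$, so that $U\cap gU$ is again a neighborhood of $g'H$; it then picks $g_1H\in U$ with $g_1H\ne g(g_1H)$ and $g(g_1H)\in U\cap gU$, so that $g_1H$ and $g(g_1H)$ lie in a common convex normal neighborhood and hence are joined by a geodesic; translating by $g_1^{-1}$ reduces to the pair $(1H,\,g_1^{-1}gg_1H)$, and the interchange hypothesis then gives $\sigma(g_1^{-1}gg_1)\in Hg_1^{-1}g^{-1}g_1H$ with $h=g_1$. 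Your appeal to Example~\ref{no-join} only shows that a fixed-point-free translation may move \emph{every} point off its geodesic cone; for such $g$ the paper's argument as written also leans on the existence of a fixed point, but the mechanism that makes (3) provable at all --- local geodesic convexity combined with a choice of base point adapted to $g$ --- is entirely absent from your write-up, so part (3) remains unproved in your proposal.
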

\begin{proof}
The first two assertions follow from the proof for the Riemannian case. 
For the third one, assume that there exists a diffeomorphism $\phi$ such 
that the conditions of $(3)$ hold. Translating by an element of $G$ we 
may assume that $\phi(1H)=1H$. Then $\sigma: g\mapsto \phi g\phi^{-1}$ 
defines an automorphism $\sigma$ of $G$ such that $\sigma(H)=H$ and 
$\phi(gH)=\sigma(g)H$. Fix $g\in G$ and $g\not\in H$. Let $g'H$ be a fixed 
point of $g$ in $G/H$ and $U$ be a normal neighborhood of $g'H$. Then 
$U\cap gU$ is a neighborhood of $g'H$. Then there exists $g_1H\in U$ such 
that $g_1H\not=gg_1H$ and $g(g_1H)\in U\cap gU$. That is, there is a 
geodesic connecting $g_1H$ and $g(g_1H)$. Then $1H$ and $g_1^{-1}gg_1H$ 
are connected by a geodesic. By the assumption, there exists $g_2\in G$ 
such that $$g_2g_1^{-1}gg_1H=s(1H)=1H \text{ and } g_2(1H)=s(g_1^{-1}gg_1H).$$ 
It follows that $\sigma(g_1^{-1}gg_1)\in Hg_1^{-1}g^{-1}g_1H$.
\end{proof}

\section{Examples of pseudo-Riemannian weakly symmetric spaces}\label{examples}

We describe a number of weakly symmetric pseudo-Riemannian manifolds.  The
reductive ones will be familiar to many readers.

\begin{exam}\label{exam1}$M_{p,q}=G_{p,q;{\mathbb C}}/U(p,q;{\mathbb C})$ 
where $G_{p,q;{\mathbb C}}$ is given as follows.  The space ${\mathbb C}^n$ 
of $n$-tuples over $\mathbb C$ is viewed as a right vector space (so that
it is easy to extend considerations from ${\mathbb C}$ to the quaternions
and the octonions). Scalars act on the right and linear transformations act 
on the left. For any non-negative integers $p$ and $q$ and $n=p+q$, we have 
the Hermitian vector space ${\mathbb C}^{p,q}$ with the Hermitian form 
$$
\langle v,w\rangle_1=\sum^p_{i=1}v^i\overline{w}^i-
	\sum^q_{i=1}v^{p+i}\overline{w}^{p+i}.
$$
Its unitary group is $U(p,q;{\mathbb C})$. We have a Heisenberg 
group $H_{p,q;{\mathbb C}}$ which is the real vector space 
${\rm Im} {\mathbb C}+{\mathbb C}^{p,q}$ with the group composition
$$(v,w)(v',w')=(v+v'+{\rm Im}h(w,w'),w+w').$$
Then $g(v,w)=(v,g(w))$ defines an action of the unitary group 
$U(p,q;{\mathbb C})$ by automorphisms on $H_{p,q;{\mathbb C}}$. 
The semidirect product group 
$G_{p,q;{\mathbb C}}:=H_{p,q;{\mathbb C}}\rtimes U(p,q;{\mathbb C})$ 
has group composition
$$(v,w,g)(v',w',g')=(v+v'+{\rm Im}h(w,g(w')),w+g(w'),gg').$$
It is clear that $H_{p,q;{\mathbb C}}$ is 2-step nilpotent. If $q=0$, 
denote $H_{n,0;{\mathbb C}}$ and $G_{n,0;{\mathbb C}}$ by 
$H_{n;{\mathbb C}}$ and $G_{n;{\mathbb C}}$ respectively. The usual 
Heisenberg group $H_{\mathbb C}$ is $H_{n;{\mathbb C}}$. For details 
and extension to quaternionic and octonionic Heisenberg groups 
see \cite{Wo1}. 
\medskip

The action of $G_{p,q;{\mathbb C}}$ on 
$M_{p,q}:=G_{p,q;{\mathbb C}}/U(p,q;{\mathbb C})$ is transitive. At 
$x:=(1,0,\cdots,0)$ the isotropy subgroup of $G_{p,q;{\mathbb C}}$ 
is $U(p,q;{\mathbb C})$. The tangent space at $x$ can be viewed as 
${\rm Im} {\mathbb C}+{\mathbb C}^{p,q}$, and $dg(v,w)=(v,gw)$ for any 
$(v,w)\in T_xM_{p,q}$ and $g\in U(p,q;{\mathbb C})$. Any 
$G_{p,q;{\mathbb C}}$-invariant pseudo-Riemannian metric is determined by a 
$U(p,q;{\mathbb C})$-invariant inner product on $T_xM_{p,q}$. The action of 
$U(p,q;{\mathbb C})$ on ${\mathbb C}^{p,q}$ is absolutely irreducible, and
it is and trivial on ${\rm Im}{\mathbb C}$, so every 
$U(p,q;{\mathbb C})$-invariant inner product $\langle, \rangle$  on
$T_xM_{p,q}$ satisfies $\langle {\rm Im}{\mathbb C}, {\mathbb C}^{p,q}\rangle
= 0$.  Let $\langle v,v'\rangle_1 = v\bar v'$,  the usual inner product on 
${\rm Im}{\mathbb C}$, and $\langle w, w' \rangle_2 = {\rm Re\,}h(w,w')$, the
usual (real) inner product on ${\mathbb C}^{p,q}$. Then $\langle, \rangle$ 
can be any real linear combination $a\langle , \rangle_1 \oplus
b\langle , \rangle_2$, $a \ne 0 \ne b$.  That gives all the 
$G_{p,q;{\mathbb C}}$-invariant pseudo-Riemannian metrics on $M_{p,q}$.
\medskip

The map $\phi$ given by conjugation on each coordinate is an isometry of 
$M_{p,q}$.  More precisely, 
$$
\phi(x)=x \text{ and } d\phi(v,w)=(-v,\sqrt{-1}w).
$$
Furthermore, there exists an element $g_w$ in $U(p,q;{\mathbb C})$ satisfying 
$dg_w(-v,\sqrt{-1}w)=(-v,-w)$ since $U(p,q;{\mathbb C})$ acts transitively on 
any cone or quadric 
$\{w \in {\mathbb C}^{p,q}\setminus \{0\} \mid h(w,w) = r\}$.
That is, the isometry $g_w\cdot\phi$ satisfies
$$g_w\cdot\phi(x)=x \text{ and } dg_w\cdot d\phi(v,w)=(-v,-w).$$
Since the isotropy group at different points are conjugate in 
$G_{p,q,{\mathbb C}}$ for a homogeneous space, we have:
\begin{enumerate}
\item $(M_{p,q},\langle,\rangle)$ is a pseudo-Riemannian weakly symmetric 
manifold.
\item $(M_{p,q},\langle,\rangle)$ is a Riemannian weakly symmetric manifold 
if and only if, either $q=0$ and $a,b$ are positive, or $p=0$ and $a$ is 
positive and $b$ are negative.  \hfill $\diamondsuit$
\end{enumerate}
\end{exam}
\medskip

The variation of pseudo-Riemannian weakly symmetric structure just
described, is a simple group-theoretic phenomenon.
Let $(M,\langle,\rangle)$ be a pseudo-Riemannian weakly symmetric manifold. 
Express $M = G/H$ where $G = I(M,\langle,\rangle)$.  Then, from the action
of the isotropy subgroup $H$ on the tangent space to $M$ at $1H$ we see
that every $G$--invariant pseudo-Riemannian metric on $M$ is weakly symmetric.
We will see additional instances of this in Examples \ref{exam2}, \ref{exam3}
and \ref{exam4}.

\begin{exam}\label{exam2}
Look at the transitive action of $U(n)$ on 
$M=S^{2n-1}(1)\subset {\mathbb C}^{n}$. If $x=(1,0,\cdots,0)$, then 
$H_x=U(n-1)$. Here $T_xM$ can be view as 
${\rm Im} {\mathbb C}+{\mathbb C}^{n-1}$ and $$dg(v,w)=(v,gw)$$ for any 
$g\in H_x$ and $(v,w)\in {\rm Im} {\mathbb C}+{\mathbb C}^{n-1}$. Any 
$U(n)$-invariant pseudo-Riemannian metric on $M$ is determined by a 
$U(n-1)$-invariant inner product on the tangent space.  Let 
$\langle,\rangle_1$ and $\langle,\rangle_2$ be the usual inner products on 
${\rm Im}{\mathbb C}$ and ${\mathbb C}^{n-1}$ respectively. Then 
$$\langle (v,w),(v',w')\rangle=a\langle v,v'\rangle_1+b\langle w,w'\rangle_2,
\,\, a \text{ and } b \text{ nonzero constants},$$ 
is a $U(n-1)$-invariant inner product that induces a $U(n)$-invariant 
pseudo-Riemannian metric on $M$. The map $\phi$ defined by conjugation on 
each coordinate is an isometry of $M$. Then $\phi(x)=x$ and 
$d\phi(v,w)=(-v,\sqrt{-1}w)$ for any $(v,w)\in T_xM$. Since $U(n-1)$ acts 
transitively on the unit sphere in ${\mathbb C}^{n-1}$, it contains an 
element $g_w$ that sends $\sqrt{-1}w$ to $-w$. Hence if $G=U(n)\cup \phi U(n)$, 
then $H_x=U(n-1)\cup \phi U(n-1)$, and if $(v,w)\in T_xM$ then there 
exists $k_w = g_w\cdot\phi\in U(n-1)$ such that $$k_w(v,w)=(-v,-w).$$ 
Hence the $G$-invariant 
pseudo-Riemannian metric $\langle,\rangle$ on $M$ is weakly symmetric,
and of course is Riemannian weakly symmetric if $a, b > 0$. 
\hfill $\diamondsuit$\end{exam}

\begin{exam}\label{exam3}
Consider the transitive action of $Sp(1)\times Sp(n)$ on 
$M=S^{4n-1}(1)\subset {\mathbb H}^n$ given by $(g_1,g_2)(x)=g_2(x)g_1^{-1}$. 
The isotropic group of $x=(1,0,\cdots,0)$ is equal to 
$\Delta Sp(1)\times Sp(n-1)$, here $\Delta Sp(1)$ means that $Sp(1)$ 
embeds diagonally. The tangent space $T_xM$ can be identified with 
${\rm Im} {\mathbb H}+{\mathbb H}^{n-1}$, and 
$$
d(g_1,g_2)(v,w)=(g_1vg_1^{-1},g_2(w))
$$ 
for any $(g_1,g_2)\in \Delta Sp(1)\times Sp(n-1)$. Since 
$v\mapsto g_1vg_1^{-1}$ is the standard action of $Sp(1)/{\mathbb Z}_2=SO(3)$ 
on ${\mathbb R}^3$ and $Sp(n-1)$ acts transitively on the unit sphere in 
${\mathbb H}^{n-1}$, we have that for any $(v,w)\in T_x(M)$ there exists 
$(g_1,g_2)\in \Delta Sp(1)\times Sp(n-1)$ such that $d(g_1,g_2)(v,w)=(-v,-w).$ 
Hence any $(Sp(1)\times Sp(n))$-invariant pseudo-Riemannian metric on $M$ 
is weakly symmetric. Let $\langle,\rangle_1$ and $\langle,\rangle_2$ be 
$Sp(1)$- and $Sp(n-1)$-invariant metrics on ${\rm Im}{\mathbb H}$ and 
${\mathbb H}^{n-1}$ respectively. As before, if $a \ne 0 \ne b$ then
$a\langle,\rangle_1+b\langle,\rangle_2$ defines a weakly symmetric
$(Sp(1)\times Sp(n))$-invariant pseudo-Riemannian metric on $M$, which of 
course is Riemannian if and only if $a, b > 0$.
\hfill $\diamondsuit$\end{exam}

\begin{exam}\label{exam4}
Let $M=S^{15}=Spin(9)/Spin(7)$ with the tangent space 
$T_x(M)={\rm Im} {\mathbb O}+{\mathbb O}$. For any $g\in Spin(7)$, 
$dg(v,w)=(\rho(g)(v),\varphi(g)(w))$, where $\rho$ is the standard 
representation of $Spin(7)$ on ${\rm Im} {\mathbb O}$ (via the two fold 
cover $Spin(7) \to SO(7)$), and where $\varphi$ is the spin representation 
of $Spin(7)$ on ${\mathbb O}$.  If 
$(v,w)\in {\rm Im} {\mathbb O}+{\mathbb O}$, we have 
$g_1\in Spin(7)$ such that $\rho(g_1)v=-v$. The isotropy subgroup of $Spin(7)$ 
at $v$ (under the action of $\rho$) is $Spin(6)=SU(4)$, and the restriction 
of the action of $Spin(7)$ on ${\mathbb O}$ to $SU(4)$ is the standard action 
of $SU(4)$ on ${\mathbb O}$.  That is transitive on the unit sphere, so 
there exists an element $g_2\in Spin(7)$ such that 
$$
\rho(g_2)v=v \text{ and }\varphi(g_2)\varphi(g_1)w=-w.
$$ 
In other words, $d(g_1,g_2)(v,w)=(-v,-w)$. Let $\langle,\rangle_1$ and 
$\langle,\rangle_2$ be the $Spin(7)$-invariant inner products on 
${\rm Im}{\mathbb O}$ and ${\mathbb O}^{n-1}$ respectively. 
Then $a\langle,\rangle_1+b\langle,\rangle_2$ is a $Spin(7)$-invariant inner
product on the tangent space of $M=S^{15}$. As before, if
$a \ne 0 \ne b$ it induces a weakly symmetric $Spin(9)$-invariant 
pseudo-Riemannian metric on $M$. If $a, b > 0$ that metric is
Riemannian.
\hfill $\diamondsuit$\end{exam}

\section{Geodesics in pseudo-Riemannian weakly symmetric spaces}
\label{geodesics}
In this section we discuss questions of completeness and the geodesic 
orbit property, and implications for the structure of the nilradical
of the isometry group.

\begin{proposition}\label{prop3}
Any pseudo-Riemannian weakly symmetric space is geodesically complete.
\end{proposition}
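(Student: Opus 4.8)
The plan is to argue by contradiction. Let $\gamma\colon(a,b)\to M$ be a maximal geodesic and suppose, say, that $b<\infty$; I will produce an extension past $b$, contradicting maximality. The only structural tool available is the weak symmetry hypothesis, so the idea is to reflect $\gamma$ in one of its own interior points by a globally defined isometry and then read off the extension from the fact that this isometry is defined on all of $M$.

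Concretely, fix an interior parameter $t_0\in(a,b)$, put $x=\gamma(t_0)$ and $\xi=\gamma'(t_0)\neq 0$, and let $\phi=\phi_{x,\xi}$ be the isometry furnished by the definition of weak symmetry, so that $\phi(x)=x$ and $d\phi(\xi)=-\xi$. Since $\phi$ is an isometry it carries geodesics to geodesics, and since reflection of the parameter is an affine change of variable, both $t\mapsto\phi(\gamma(t))$ and $t\mapsto\gamma(2t_0-t)$ are geodesics. At $t=t_0$ they have the same value $x$ and the same velocity $-\xi$, so by uniqueness of geodesics with prescribed initial data they coincide on the common part of their domains:
$$\phi(\gamma(t))=\gamma(2t_0-t).$$

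The key point is that the left-hand side, being the composite of the globally defined isometry $\phi$ with $\gamma$, is a geodesic defined for all $t\in(a,b)$, whereas the right-hand side is a priori only defined for $2t_0-t\in(a,b)$. Reading the identity the other way, set $\eta(s):=\phi(\gamma(2t_0-s))$ for $s\in(2t_0-b,2t_0-a)$; this is a well-defined geodesic precisely because $\phi$ is defined on all of $M$. A short check gives $\eta(t_0)=x$ and $\eta'(t_0)=\xi$, so $\eta$ and $\gamma$ agree near $t_0$ and glue to a single geodesic on $(a,b)\cup(2t_0-b,2t_0-a)$. Choosing $t_0>\tfrac{a+b}{2}$ (any $t_0\in(a,b)$ if $a=-\infty$) makes $2t_0-a>b$, so this union properly contains points beyond $b$, contradicting maximality. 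The same argument applied at the other end, choosing $t_0<\tfrac{a+b}{2}$, rules out a finite left endpoint, so $\gamma$ is defined on all of $\mathbb{R}$.

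I expect the only real subtlety to be the bookkeeping of domains: one must be sure that the reflected geodesic $\eta$ is manufactured from the globally defined $\phi$, so that it lives on the full reflected interval rather than merely reparametrizing $\gamma$ on the overlap, and that the reflection point $t_0$ is placed past the midpoint so that the gluing genuinely overshoots the finite endpoint. No completeness or Hopf--Rinow input is needed; in contrast with the Riemannian argument, which passed through homogeneity and a metric-space structure that is unavailable here, the extension comes purely from the global nature of the weak-symmetry isometries.
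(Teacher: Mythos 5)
Your argument is correct and is essentially the paper's own proof: the paper likewise extends a geodesic defined up to a finite endpoint by applying the weak-symmetry isometry at a point of the geodesic chosen sufficiently close to (i.e.\ past the midpoint toward) that endpoint, and reparametrizing the reflected geodesic to overshoot it. Your version just carries out the domain bookkeeping that the paper leaves implicit.
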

\begin{proof}
It is enough to show that a geodesic $\gamma: [0,a)\rightarrow M$ 
is extendible. Choose $b$ near $a$ in the interval, let $g$ be the isometry 
satisfying $g(\gamma(b))=\gamma(b)$ and $g(\gamma'(b))=-\gamma'(b)$. Since 
$g$ reverses the geodesic through $\gamma(b)$, a reparameterization of 
$g\circ \gamma$ provides the required extension of $\gamma$.
\end{proof}

The main theorem in \cite{BKV} is that any maximal geodesic in a Riemannian 
weakly symmetric space $M$ is an orbit of a one-parameter group of 
isometries of $M$.  Proposition~\ref{prop3} lets us follow the
argument of \cite{BKV} and push it to the pseudo-Riemannian setting.  Thus

\begin{theorem}\label{thm3}
Any maximal geodesic in a pseudo-Riemannian weakly symmetric space $M$ is 
an orbit of a one-parameter group of isometries of $M$.
\end{theorem}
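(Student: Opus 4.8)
The plan is to realize the geodesic as an orbit $t\mapsto\exp(tZ)\cdot o$ of a single one-parameter subgroup, by manufacturing translations of $\gamma$ out of the weak symmetries and then organizing them into an honest one-parameter group. First, by Proposition~\ref{prop3} the maximal geodesic $\gamma$ is defined on all of $\mathbb{R}$; normalize so that $\gamma(0)=o$. For each parameter value $s$, apply the weak symmetry at the point $\gamma(s)$ to the vector $\xi=\gamma'(s)$, obtaining an isometry $\sigma_s$ with $\sigma_s(\gamma(s))=\gamma(s)$ and $d\sigma_s(\gamma'(s))=-\gamma'(s)$. Since an isometry carries geodesics to geodesics, $\sigma_s\circ\gamma$ is the geodesic determined at parameter $s$ by the data $(\gamma(s),-\gamma'(s))$, so $\sigma_s(\gamma(t))=\gamma(2s-t)$ for all $t$. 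Here geodesic completeness is exactly what guarantees that the reflected values $\gamma(2s-t)$ all lie in the domain, so that $\sigma_s$ reflects the entire geodesic.

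Next I would build translations along $\gamma$ by composing two reflections. Setting $\tau_t:=\sigma_{t/2}\circ\sigma_0$, a direct computation gives $\tau_t(\gamma(u))=\gamma(u+t)$ for all $u$; in particular $\tau_t(o)=\gamma(t)$. Let $G=I(M,\langle,\rangle)$ and, for each $t$, let $S_t=\{g\in G:g\circ\gamma(\,\cdot\,)=\gamma(\,\cdot\,+t)\}$ be the set of translations of $\gamma$ by $t$. Each $S_t$ is nonempty because $\tau_t\in S_t$, and since an isometry $g$ with $g(o)=\gamma(t)$ and $dg(\gamma'(0))=\gamma'(t)$ must satisfy $g\circ\gamma=\gamma(\,\cdot\,+t)$ by uniqueness of geodesics, one checks $S_sS_t=S_{s+t}$ and $S_t^{-1}=S_{-t}$. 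Hence $L:=\bigcup_{t\in\mathbb{R}}S_t$ is a subgroup of $G$, and the assignment $c(g)=t$ for $g\in S_t$ is a well-defined surjective homomorphism $c\colon L\to\mathbb{R}$ (onto $\mathbb{R}/p\mathbb{Z}$ if $\gamma$ happens to be closed of period $p$), with kernel $S_0=\{g\in G:g(o)=o,\ dg_o(\gamma'(0))=\gamma'(0)\}$.

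To extract a genuine one-parameter group I would pass to Lie theory. The subgroup $S_0$ is closed in $G$, being cut out by the closed conditions $g(o)=o$ and $dg_o(\gamma'(0))=\gamma'(0)$, hence is a Lie subgroup. Granting that $L$ is likewise a closed Lie subgroup, $c$ is a smooth surjective homomorphism of Lie groups with kernel $S_0$; choosing $Z$ in the Lie algebra of $L$ with $dc(Z)=1$ gives $c(\exp tZ)=t$, so that $\exp(tZ)\in S_t$ and therefore $\exp(tZ)(o)=\gamma(t)$ for every $t$. Thus $\gamma$ is the orbit of the one-parameter group $\{\exp(tZ)\}$, which is the assertion.

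The main obstacle is exactly the claim that $L$ is a closed Lie subgroup of $G$, equivalently that the translation length $c$ stays controlled along convergent sequences of isometries. In the Riemannian case of \cite{BKV} this is facilitated by the favorable behavior of Riemannian isometry groups (compact isotropy and a proper action); in the pseudo-Riemannian setting the isotropy may be noncompact and the action need not be proper, so this step demands separate care. What makes the whole construction available in the first place is Proposition~\ref{prop3}: geodesic completeness is what lets the reflections $\sigma_s$ and the translations $\tau_t$ be defined for every real parameter, so that the translation homomorphism $c$ is surjective. With completeness secured, the remaining organization follows the argument of \cite{BKV}.
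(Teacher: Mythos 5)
Your overall route is the same as the paper's: the proof of Theorem~\ref{thm3} given there consists precisely of the observation that Proposition~\ref{prop3} supplies completeness, after which one ``follows the argument of \cite{BKV}.'' Your reflections $\sigma_s$, the transvections $\tau_t=\sigma_{t/2}\circ\sigma_0$, and the reduction to finding a one-parameter subgroup inside the group of translations of $\gamma$ are exactly that argument, and you have correctly isolated completeness as the only point where the pseudo-Riemannian setting requires new input.

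The step you leave open --- that $L=\bigcup_t S_t$ is a closed Lie subgroup of $G$ and that $c\colon L\to\mathbb{R}$ is a well-defined continuous homomorphism --- is, however, a genuine gap as stated, and not only because of noncompact isotropy or non-properness: it already fails for a dense geodesic on a flat torus, where $\gamma(t_n)$ can converge to a point off $\gamma$, so a limit of elements of $L$ need not lie in $L$; and when $\gamma$ is not injective the value $c(g)$ is not determined by $g$. The standard repair, which is in substance what \cite{BKV} does, is to replace $L$ by the set $F=\{(g,t)\in G\times\mathbb{R}: g(\gamma(u))=\gamma(u+t)\text{ for all }u\}$, which by uniqueness of geodesics equals $\{(g,t): g(o)=\gamma(t),\ dg_o(\gamma'(0))=\gamma'(t)\}$. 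This $F$ is a subgroup of $G\times\mathbb{R}$ cut out by closed conditions, hence a closed Lie subgroup by Cartan's theorem, and your transvections show that the projection $F\to\mathbb{R}$ is surjective. Since $F$ has countably many components, the image of the identity component $F^0$ cannot be a proper subgroup of $\mathbb{R}$ of the form $\{0\}$, so the projection is surjective on Lie algebras; choosing $(Z,1)\in\mathrm{Lie}(F)$ gives $\exp(tZ)\in S_t$, hence $\exp(tZ)(o)=\gamma(t)$ for all $t$. This completes your argument without ever deciding whether $L$ is closed in $G$, and it is the form in which the ``remaining organization'' you defer to \cite{BKV} should be carried out.
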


\begin{definition}
A connected pseudo-Riemannian homogeneous manifold $M$ is said to be a 
{\it geodesic orbit space} if every maximal geodesic in $M$ is an orbit of a 
one-parameter group of isometries of $M$.
\hfill $\diamondsuit$\end{definition}

It is immediate from the definition that a connected pseudo-Riemannian 
geodesic orbit space is homogeneous. 
\medskip

A connected pseudo-Riemannian manifold which admits a transitive nilpotent 
group of isometries is said to be a {\it nilmanifold}.  In the Riemannian case, 
we have:
\begin{theorem}[\cite{G1}]\label{thm1}
Let $M$ be a geodesic orbit Riemannian nilmanifold, say with transitive
nilpotent group $N$ of isometries.  Then $N$ is at most $2$-step nilpotent.
\end{theorem}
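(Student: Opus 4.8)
The plan is to translate the geodesic orbit hypothesis into a purely algebraic condition on the metric Lie algebra and then rule out nilpotency class $\geq 3$. First I would reduce to the case that $N$ acts \emph{simply} transitively: by Wilson's structure theorem for homogeneous nilmanifolds one may take $N$ to be the nilradical of $G=I(M)^0$, acting simply transitively, and identify $M$ with $N$ carrying a left-invariant metric, so it suffices to bound the nilpotency class of this $N$. Writing $\mathfrak{g}=\mathfrak{h}\oplus\mathfrak{n}$ for the isotropy decomposition at the base point $o$, the tangent space $T_oM$ is $\mathfrak{n}$ with inner product $\langle,\rangle$; since $\mathfrak{n}$ is an ideal we have $[\mathfrak{n},\mathfrak{n}]\subseteq\mathfrak{n}$, and by invariance of the metric together with the Jacobi identity each $A\in\mathfrak{h}$ acts on $\mathfrak{n}$ as a skew-symmetric derivation.

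Next I would apply the geodesic lemma of Kowalski--Vanhecke: $t\mapsto\exp(t(X+A))\,o$ is a geodesic iff $\langle[X+A,Y]_{\mathfrak{n}},X\rangle=0$ for all $Y\in\mathfrak{n}$. Expanding $[X+A,Y]=\operatorname{ad}_XY+AY$ and using that $A$ is skew, this reduces to $\langle Y,\operatorname{ad}_X^*X-AX\rangle=0$ for all $Y$. Hence $M$ is geodesic orbit iff for every $X\in\mathfrak{n}$ there is a skew derivation $A=A_X\in\mathfrak{h}$ with $A_XX=\operatorname{ad}_X^*X$. Two structural remarks then drive the argument: because $\langle\operatorname{ad}_X^*X,Z\rangle=\langle X,[X,Z]\rangle$, the vector $\operatorname{ad}_X^*X$ is orthogonal to the center $\mathfrak{z}$; and every skew derivation preserves each term $\mathfrak{n}_i$ of the descending central series, hence (being skew) preserves every summand of the orthogonal decomposition $\mathfrak{n}=\mathfrak{m}_1\oplus\cdots\oplus\mathfrak{m}_k$ defined by $\mathfrak{m}_i=\mathfrak{n}_{i-1}\ominus\mathfrak{n}_i$.

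The heart of the argument is to assume the nilpotency class $k$ is at least $3$ and derive a contradiction. I would probe the criterion with mixed vectors $X=e+g$, $e\in\mathfrak{m}_1$, $g\in\mathfrak{m}_{k-1}$. A short degree count in this filtration shows $\operatorname{ad}_e^*e=\operatorname{ad}_g^*g=\operatorname{ad}_g^*e=0$, so $\operatorname{ad}_X^*X=\operatorname{ad}_e^*g$, which lies in $\mathfrak{m}_1\oplus\cdots\oplus\mathfrak{m}_{k-2}$. Since $A_X$ preserves each $\mathfrak{m}_i$, the image $A_Xe+A_Xg$ has components only in $\mathfrak{m}_1$ and $\mathfrak{m}_{k-1}$; matching components in $A_XX=\operatorname{ad}_e^*g$ then forces $A_Xg=0$, $A_Xe=\operatorname{ad}_e^*g$, and $\operatorname{ad}_e^*g\in\mathfrak{m}_1$. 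Feeding this back through the derivation identity yields $[\operatorname{ad}_e^*g,g]=A_X[e,g]$, relating the top bracket $[e,g]\in\mathfrak{m}_k\subseteq\mathfrak{z}$ to the skew operator $A_X$.

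The main obstacle is exactly this last step: converting the family of pointwise relations into the vanishing of $[\mathfrak{m}_1,\mathfrak{m}_{k-1}]$, which would give $\mathfrak{m}_k=0$ and contradict class $k$. The difficulty is that $X\mapsto A_X$ need not be linear and the relation only pins $A_X$ down along the line $\mathbb{R}X$, so one cannot simply polarize. My plan to overcome this is to run the computation over an orthonormal basis $\{e_i\}$ of $\mathfrak{m}_1$ and assemble the resulting identities into the single nonnegative expression $\sum_i|[e_i,g]|^2=\sum_i\langle(\operatorname{ad}_{e_i}^*\operatorname{ad}_{e_i})g,g\rangle$, exploiting the skew-symmetry of each $A_{e_i+g}$ together with $A_Xg=0$ and $A_Xe=\operatorname{ad}_e^*g$ to show this sum vanishes. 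That forces $[e_i,g]=0$ for all $i$ and all $g\in\mathfrak{m}_{k-1}$, hence $\mathfrak{m}_k=0$; so $k\leq 2$ and $N$ is at most $2$-step nilpotent.
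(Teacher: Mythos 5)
Your setup is sound: the reduction to a simply transitive $N$ (compact subgroups of nilpotent groups are central, hence trivial in an effective transitive isometric action), the criterion ``$M$ is geodesic orbit iff for each $X$ there is a skew derivation $A_X$ in the isotropy algebra with $A_XX=\operatorname{ad}_X^*X$,'' the invariance of the layers $\mathfrak{m}_i=\mathfrak{n}_{i-1}\ominus\mathfrak{n}_i$ under skew derivations, and the component-matching that yields $A_Xg=0$, $A_Xe=\operatorname{ad}_e^*g\in\mathfrak{m}_1$ and $[\operatorname{ad}_e^*g,g]=A_X[e,g]$ are all correct. The gap is in the last step. Write $B_{ij}=\langle g,[e_i,e_j]\rangle$ (antisymmetric) and $S_{ij}=\langle [e_i,g],[e_j,g]\rangle$ (symmetric, positive semidefinite). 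Since $\operatorname{ad}_{e_i}^*g=\sum_j B_{ij}e_j$, the only scalar identity you can extract from $[\operatorname{ad}_{e_i}^*g,g]=A_i[e_i,g]$ together with skewness of $A_i$ is $\langle A_i[e_i,g],[e_i,g]\rangle=0$, i.e.\ $(BS)_{ii}=0$ for each $i$; summing over $i$ gives $\operatorname{tr}(BS)=0$, which holds automatically for any antisymmetric $B$ and symmetric $S$. So the identities produced by your chosen probes $X=e_i+g$ do not force $\operatorname{tr}(S)=\sum_i|[e_i,g]|^2$ to vanish; the concluding summation is not justified. The fix within your framework is to probe with a \emph{central} second summand: for $X=e+c$ with $e\in\mathfrak{m}_1$ and $c\in\mathfrak{m}_k=\mathfrak{n}_{k-1}$ one gets $\operatorname{ad}_X^*X=\operatorname{ad}_e^*c$, and matching components (using $k\geq 3$, so $k-1\neq 1,k$) kills the $\mathfrak{m}_{k-1}$-component of $\operatorname{ad}_e^*c$, i.e.\ $\langle c,[e,Z]\rangle=0$ for all $Z\in\mathfrak{m}_{k-1}$ and all $c\in\mathfrak{m}_k$; since $[\mathfrak{m}_1,\mathfrak{m}_{k-1}]\subseteq\mathfrak{n}_{k-1}$ and $\mathfrak{n}_{k-1}=[\mathfrak{m}_1,\mathfrak{m}_{k-1}]$, this gives $\mathfrak{n}_{k-1}=0$ directly.

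For comparison: the paper does not prove this theorem (it cites Gordon \cite{G1}), but it reproduces the argument in adapted form in the proof of its final theorem, and that argument is much shorter and avoids the layer decomposition entirely. Set $\mathfrak{a}=[\mathfrak{n},\mathfrak{n}]^{\perp}$ in $\mathfrak{n}$. For $\xi\in[\mathfrak{n},\mathfrak{n}]$ and $\zeta\in\mathfrak{a}$ the geodesic orbit condition gives $\langle[\xi+A_\xi,\zeta],\xi\rangle=0$; since $A_\xi$ preserves $\mathfrak{a}$ and $\mathfrak{a}\perp[\mathfrak{n},\mathfrak{n}]$, the $A_\xi$-term drops out and $\langle[\zeta,\xi],\xi\rangle=0$ for all $\xi$. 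Hence $\operatorname{ad}(\zeta)|_{[\mathfrak{n},\mathfrak{n}]}$ is skew-symmetric for the (positive definite) restricted metric and also nilpotent, hence zero; as $\mathfrak{a}$ generates $\mathfrak{n}$, this forces $[\mathfrak{n},[\mathfrak{n},\mathfrak{n}]]=0$. Note that this argument probes with $X=\xi$ in the derived algebra, which is exactly the kind of probe your scheme omits.
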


By a small extension of Theorem~\ref{thm1}, for geodesic orbit Riemannian 
nilmanifolds, we have:

\begin{theorem}[\cite{Wo1}]\label{thm2}
Let $(M,\langle,\rangle)$ be a connected and simply connected Riemannian 
geodesic orbit space, $G=I(M,\langle,\rangle)^0$, and $N$ the nilradical of 
$G$. Then $N$ is at most $2$-step nilpotent.
\end{theorem}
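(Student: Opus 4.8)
The plan is to deduce Theorem~\ref{thm2} from Theorem~\ref{thm1} by showing that a connected, simply connected Riemannian geodesic orbit space $(M,\langle,\rangle)$ with $G=I(M,\langle,\rangle)^0$ acts on its own nilradical $N$ in a way that makes $N$ itself a geodesic orbit nilmanifold. The key reduction is to produce a \emph{transitive} action of $N$ on $M$, or on a suitable totally geodesic $N$-orbit, so that Theorem~\ref{thm1} applies verbatim and forces $N$ to be at most $2$-step nilpotent.

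First I would recall that for a Riemannian geodesic orbit space $G/G_x$ with $G=I(M)^0$, the structure theory of homogeneous Riemannian manifolds (and in particular the results underlying \cite{Wo1}) gives a decomposition of $G$ via its nilradical $N$ and a reductive complement, together with the fact that $N$ acts freely. The decisive geometric input is that in a geodesic orbit space every geodesic is a homogeneous geodesic, i.e.\ an orbit of a one-parameter subgroup $\exp(tX)$ with $X\in\operatorname{Lie}(G)$. Restricting attention to the $N$-orbit through the base point $o=1\cdot G_x$, I would argue that this orbit is a closed, totally geodesic submanifold carrying the induced metric, and that $(N\cdot o,\langle,\rangle)$ is again a geodesic orbit space: any geodesic of $M$ tangent to $N\cdot o$ stays inside $N\cdot o$ and is a homogeneous geodesic whose generating one-parameter group can be taken inside $N$. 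Here simple connectivity of $M$ is what guarantees that $N$ acts simply transitively on this orbit, so that $N\cdot o$ is a genuine Riemannian nilmanifold rather than a quotient by a lattice.

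With that reduction in hand, Theorem~\ref{thm1} applies directly to the nilmanifold $N\cdot o$ with its transitive nilpotent isometry group (a quotient of $N$), yielding that it is at most $2$-step nilpotent. Since $N\cdot o$ is acted on simply transitively by $N$, the nilpotency class of $N$ equals that of the nilmanifold, and hence $N$ is at most $2$-step nilpotent, which is the assertion.

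\textbf{The hard part} will be justifying that the $N$-orbit is totally geodesic and inherits the geodesic orbit property, because a geodesic of $M$ tangent to $N\cdot o$ need not a priori be generated by an element of $\operatorname{Lie}(N)$ — the generating vector $X$ provided by the geodesic orbit hypothesis lives in $\operatorname{Lie}(G)$, and one must project it into $\operatorname{Lie}(N)$ while controlling the reductive part. I would handle this by using the reductive decomposition $\operatorname{Lie}(G)=\operatorname{Lie}(N)\oplus\mathfrak{m}$ adapted to a maximal compact or reductive complement, and the fact that homogeneous geodesics tangent to the $N$-direction can be renormalized so that their reductive component fixes $o$, leaving a one-parameter subgroup of $N$. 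This is exactly the place where simple connectivity, together with the explicit structure of $I(M)^0$ for geodesic orbit spaces from \cite{Wo1}, is needed to rule out twisting by the isotropy and to keep the argument of Theorem~\ref{thm1} intact.
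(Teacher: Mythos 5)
The paper does not prove Theorem \ref{thm2} in the text (it cites \cite{Wo1}), but the argument it has in mind is the one it writes out, in pseudo-Riemannian form, in the final theorem of Section \ref{geodesics}: a direct Lie-algebraic computation. One fixes a reductive decomposition ${\mathfrak g}={\mathfrak m}+{\mathfrak h}$, lets ${\mathfrak a}$ be the orthocomplement of $[{\mathfrak n},{\mathfrak n}]$ in ${\mathfrak m}$, and applies the geodesic orbit condition $\langle [X+A,Z]_{\mathfrak m},X\rangle=0$ with $X=\xi\in[{\mathfrak n},{\mathfrak n}]$ and $Z=\zeta\in{\mathfrak a}$; since $[{\mathfrak n},{\mathfrak n}]$ and ${\mathfrak a}$ are $Ad(H)$-stable this gives that $ad(\zeta)|_{[{\mathfrak n},{\mathfrak n}]}$ is skew-symmetric, hence zero for $\zeta\in{\mathfrak n}\cap{\mathfrak a}$ because it is also nilpotent, and $[{\mathfrak n},[{\mathfrak n},{\mathfrak n}]]=0$ follows because ${\mathfrak n}\cap{\mathfrak a}$ generates ${\mathfrak n}$. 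Your route --- restrict to the $N$-orbit and invoke Theorem \ref{thm1} --- is genuinely different, and it has a real gap exactly at the step you flag as ``the hard part.''

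Concretely: the claims that $N\cdot o$ is totally geodesic and that it inherits the geodesic orbit property \emph{for the group $N$} are not justified, and they are where all the content lies. In a geodesic orbit space the geodesic through $o$ with initial vector $X\in{\mathfrak n}\cap{\mathfrak m}$ is $t\mapsto \exp(t(X+A))\cdot o$ for some $A\in{\mathfrak h}$, and $\exp(t(X+A))$ is a one-parameter subgroup of $G$ that in general lies in neither $N$ nor $NH$; so a geodesic tangent to $N\cdot o$ has no a priori reason to remain in $N\cdot o$, and the orbit has no a priori reason to be totally geodesic. Your proposed repair --- ``renormalize so that the reductive component fixes $o$'' --- is not a legitimate operation on one-parameter groups, since $\exp(t(X+A))\neq\exp(tX)\exp(tA)$ and discarding the ${\mathfrak h}$-component changes the curve. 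The fact that the $N$-orbits of a geodesic orbit space are fibers of a Riemannian submersion and are themselves geodesic orbit nilmanifolds is a theorem of Gordon whose proof already contains the skew-symmetry computation above, so reducing to Theorem \ref{thm1} this way does not avoid that computation; it presupposes it. (The simple-transitivity point --- $N\cap H$ is trivial as a compact subgroup of the simply connected nilradical --- is fine but peripheral.) To close the gap you should replace the orbit reduction by the direct argument: apply Proposition \ref{prop4} with $k=0$ to vectors in $[{\mathfrak n},{\mathfrak n}]$ paired against the orthocomplement ${\mathfrak a}$, exactly as in the paper's last theorem, where positive definiteness of the metric makes the skew-symmetric-plus-nilpotent-implies-zero step automatic.
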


Combining Theorems~\ref{thm1} and \ref{thm2}, we have:

\begin{theorem}\label{thm2a}
Let $(M,\langle,\rangle)$ be a connected and simply connected Riemannian 
weakly symmetric manifold, $G=I(M,\langle,\rangle)^0$, and $N$ the nilradical 
of $G$. Then $N$ is at most $2$-step nilpotent.
\end{theorem}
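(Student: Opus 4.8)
```latex
The plan is to combine the two preceding theorems by showing that a
connected, simply connected Riemannian weakly symmetric manifold falls
under the hypotheses of Theorem~\ref{thm2}.  The bridge is the geodesic
orbit property.  In the Riemannian setting, Theorem~\ref{thm3}
specializes to the classical result that every maximal geodesic in a
Riemannian weakly symmetric space is an orbit of a one-parameter group
of isometries; this is precisely the defining condition of a geodesic
orbit space.  So the first step is to observe that a Riemannian weakly
symmetric manifold is in particular a geodesic orbit space.

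Second, I would verify that the other hypotheses of Theorem~\ref{thm2}
are inherited.  By Proposition~\ref{homog}, a connected weakly symmetric
manifold is homogeneous with $G = I(M,\langle,\rangle)^0$ transitive, so
$M = G/H$ is a connected pseudo-Riemannian homogeneous manifold; in the
Riemannian case this is exactly the homogeneity needed for the geodesic
orbit space definition.  The connectedness and simple connectedness of
$M$ are assumed in the statement and carry over verbatim, and the
isometry group $G = I(M,\langle,\rangle)^0$ is the same object named in
Theorem~\ref{thm2}.  Thus all the structural data match: $M$ connected
and simply connected, Riemannian, geodesic orbit, with the same $G$ and
hence the same nilradical $N$.

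With these identifications in place, the conclusion is immediate:
Theorem~\ref{thm2} applies directly and gives that $N$ is at most
$2$-step nilpotent.  In effect the proof is a one-line deduction,
\emph{``weakly symmetric $\Rightarrow$ geodesic orbit, then invoke
Theorem~\ref{thm2},''} with the substantive content already packaged
into Theorem~\ref{thm3} (geodesic orbit property) and Theorem~\ref{thm2}
(the nilradical bound for geodesic orbit spaces).

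The only point requiring genuine care---the place I would flag as the
main obstacle---is making sure the passage from the ``one-parameter
group of isometries'' conclusion of Theorem~\ref{thm3} to the exact
wording of the geodesic orbit space definition is clean.  Theorem~\ref{thm3}
is stated for the full pseudo-Riemannian setting, and the geodesic orbit
definition also quantifies over \emph{maximal} geodesics and
one-parameter groups of isometries of $M$; I must confirm that the
isometry group used in Theorem~\ref{thm3} is consistent with $G =
I(M,\langle,\rangle)^0$ so that the nilradical referenced in
Theorem~\ref{thm2} is unambiguous.  Since a one-parameter subgroup lies
in the identity component, and the geodesic orbit property is a property
of $M$ rather than of a particular group, this consistency holds, and no
further work is needed beyond citing the two theorems.
```
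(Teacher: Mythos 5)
Your proposal is correct and follows essentially the same route as the paper: the paper obtains Theorem~\ref{thm2a} by combining the geodesic orbit property of weakly symmetric spaces (Theorem~\ref{thm3}, from \cite{BKV}) with the nilradical bound for geodesic orbit spaces (Theorem~\ref{thm2}), exactly as you do. Your additional care about matching the isometry group in the two statements is reasonable but, as you note, costs nothing.
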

This result is due independently to Benson-Ratcliff, Gordon and Vinberg.
See \cite{Wo1} for details.
%\medskip

\begin{exam}\label{exam5}
This is an example of Kath and Olbrich \cite[Example 4.6]{KO} which shows 
that Theorem \ref{thm2a} fails dramatically in the pseudo-Riemannian case.
For each positive integer $m$ define a real Lie algebra
$
\mathfrak{g} = {\mathbb R}^m \oplus {\mathbb C}^{m+1}
$.  
We have the standard real basis $\{z_k\}_{1\leqq k \leqq m}$ of ${\mathbb R}^m$
and the standard complex basis $\{e_i\}_{1\leqq i \leqq m+1}$ of
${\mathbb C}^{m+1}$, and thus a real basis $\{z_k\}\cup \{e_i\} \cup \{f_j\}$
of $\mathfrak{g}$ where $f_j = \sqrt{-1}\, e_j$.  The nonzero brackets
between the basis vectors are
$$
[e_i,f_j] = z_{i+j-1}, \,\, [z_k,e_i] = f_{i+k}, \,\, [z_k,f_j] = -e_{k+j}
$$
where $z_\ell = e_q = f_q = 0$ for $\ell > m, q > m+1$.  
As usual let $\mathfrak{g} = \mathfrak{g}^0$ and
$\mathfrak{g}^{s+1} = [\mathfrak{g}, \mathfrak{g}^s]$.
Then $\mathfrak{g}^{2r-1}$ is spanned by $\{z_k, e_i, f_j\}$ with
$r+1 \leqq i,j \leqq m+1$ and $r \leqq k \leqq m$ and
$\mathfrak{g}^{2r}$ is spanned by $\{z_k, e_i, f_j\}$ with
$r+1 \leqq i,j \leqq m+1$ and $r+1 \leqq k \leqq m$.  Thus $\mathfrak{g}$
is $(2m+1)$-step nilpotent.
\medskip

The inner product
on $\mathfrak{g}$ is given by ${\mathbb R}^m \perp {\mathbb C}^{m+1}$ and
$$
\langle e_i, f_j \rangle = 0, \,\, \langle e_i, e_j\rangle =
\delta_{i+j,m+2} = \langle f_i,f_j\rangle, \,\, \langle z_k, z_\ell \rangle
= \delta_{k+\ell, m+1}\,\, .
$$
Consider the corresponding pseudo-Riemannian metric on the connected 
simply connected nilpotent Lie group $G$ with Lie algebra $\mathfrak{g}$.
Compute $\langle x, [y,z]\rangle = \langle [x,y],z\rangle$ to see that the
corresponding pseudo-Riemannian metric is bi-invariant.  Thus $G$ is a
symmetric pseudo-Riemannian nilmanifold which, as  Lie group, 
is $(2m+1)$-step nilpotent.
\hfill $\diamondsuit$\end{exam}

We now give a sufficient condition for $N$ to be 
$2$-step nilpotent in the pseudo-Riemannian cases.
\medskip

Let $(M,\langle,\rangle$ be a pseudo-Riemannian homogeneous manifold. 
Let $G\subset I(M)^0$ be a closed subgroup that acts transitively on $M$. 
Let $p\in M$ and $H$ the isotropy group of $G$ at $p$, so $M$ 
can be identified with $G/H$. The pseudo-Riemannian metric 
$\langle, \rangle$ on $M$ can be viewed as a $G$-invariant
metric on $G/H$. 
\medskip

If the metric $\langle,\rangle$ is positive definite, then 
$(G/H, \langle,\rangle)$ is a reductive homogeneous space. If the 
metric $\langle,\rangle$ is indefinite, a reductive decomposition need 
not exist. See \cite{FMP1} for an example of nonreductive pseudo-Riemannian 
homogeneous space.  Fix a fixed reductive decomposition 
${\mathfrak g}={\mathfrak m}+{\mathfrak h}$.  We
identify ${\mathfrak m}\subset {\mathfrak g}=T_eG$ with the 
tangent space $T_pM$ via the projection
$\pi: G\rightarrow G/H=M$. Using this identification we view the scalar 
product $\langle,\rangle_p$ on $T_pM$ as an $Ad(H)$-invariant scalar 
product on ${\mathfrak m}$.
\medskip

The definition of a homogeneous geodesic is well-known in the Riemannian case 
(see, e.g., \cite{KV1}). In the pseudo-Riemannian case, the necessary 
generalized version was given in \cite{DK1}:

\begin{definition}
Let $M = G/H$ be a pseudo-Riemannian reductive homogeneous space, 
${\mathfrak g}={\mathfrak m}+{\mathfrak h}$ a reductive decomposition, 
and $p$ the base point of $G/H$.  Let $s \mapsto \gamma(s)$ be a geodesic 
through $p$ with affine parameter $s$ in an open interval $J$.  Then
$\gamma$ is {\it homogeneous} if there exist

1) a diffeomorphism $s=\phi(t)$ between the real line and the open interval $J$
and

2) a vector $X\in {\mathfrak g}$ such that $\gamma(\phi(t))=exp(tX)(p)$ 
for all $t\in {\mathbb R}$.

\noindent
The vector $X$ is then called a {\it geodesic vector}.
\hfill $\diamondsuit$\end{definition}

\begin{remark}
In the Riemannian situation or the pseudo-Riemannian weakly symmetric space 
cases, the diffeomorphism from the condition 1 is always the identity map 
on the real line and hence the definition can be formulated more simply.
\hfill $\diamondsuit$\end{remark}

The basic formula characterizing geodesic vectors in the pseudo-Riemannian
case appeared in \cite{FMP1} and \cite{P1}, but without a proof. The correct 
mathematical formulation with the proof was given in \cite{DK1}:
\begin{lemma}(Geodesic Lemma). Let $M=G/H$ be a reductive pseudo-Riemannian homogeneous space, ${\mathfrak g}={\mathfrak m}+{\mathfrak h}$ a reductive decomposition and $p$ the base
point of $G/H$. Let $X\in {\mathfrak g}$. Then the curve $\gamma(t)=exp(tX)(p)$ (the orbit of
a one-parameter group of isometries) is a geodesic curve with respect to some
parameter $s$ if and only if
$$\langle [X,Z]_{\mathfrak m}, X_{\mathfrak m}\rangle=k\langle X_{\mathfrak m}, Z\rangle$$
for all $Z\in {\mathfrak m}$, where $k\in {\mathbb R}$ is a constant. Further, if $k=0$, then $t$ is an affine parameter for this geodesic. If $k\not=0$, then
$s=e^{-kt}$ is an affine parameter for the geodesic. The second case can occur only if the curve $\gamma(t)$ is a null curve in a (properly) pseudo-Riemannian space.
\end{lemma}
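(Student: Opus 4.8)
The plan is to reduce the statement to a single computation of the Levi--Civita covariant derivative at the base point, and then treat the reparametrization and the null--curve dichotomy formally. For $X\in\mathfrak g$ let $X^\ast$ denote the associated Killing (fundamental) vector field, $X^\ast_q=\frac{d}{dt}\big|_{t=0}\exp(tX)\cdot q$, so that the orbit $\gamma(t)=\exp(tX)(p)$ has velocity $\dot\gamma(t)=X^\ast_{\gamma(t)}$ and $\dot\gamma(0)=X_{\mathfrak m}$ under the identification $\mathfrak m\cong T_pM$. Because each $\exp(t_0X)$ is an isometry carrying $\gamma(t)$ to $\gamma(t+t_0)$ and preserving $\nabla$, the field $\nabla_{\dot\gamma}\dot\gamma$ is equivariant along the orbit; a reparametrized geodesic satisfies $\nabla_{\dot\gamma}\dot\gamma\parallel\dot\gamma$, and equivariance forces the coefficient to be constant. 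Hence $\gamma$ is a geodesic up to reparametrization if and only if $\nabla_{\dot\gamma}\dot\gamma=\lambda\,\dot\gamma$ for a constant $\lambda$, and it suffices to test this identity at $p$.

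The key step is to show that, for every $Z\in\mathfrak m$,
\[
\langle (\nabla_{X^\ast}X^\ast)_p,\,Z\rangle = -\langle [X,Z]_{\mathfrak m},\,X_{\mathfrak m}\rangle .
\]
I would obtain this without choosing an explicit connection, using only that $X^\ast$ is Killing. Skew--symmetry of $v\mapsto\nabla_v X^\ast$ gives $\langle\nabla_{X^\ast}X^\ast,Z^\ast\rangle=-\langle\nabla_{Z^\ast}X^\ast,X^\ast\rangle=-\tfrac12 Z^\ast\langle X^\ast,X^\ast\rangle$. To evaluate the last derivative at $p$, note that since $G$ acts by isometries, $\langle X^\ast,X^\ast\rangle(g\cdot p)=\langle(\mathrm{Ad}(g^{-1})X)_{\mathfrak m},(\mathrm{Ad}(g^{-1})X)_{\mathfrak m}\rangle$; differentiating along $g=\exp(tZ)$ and using $\tfrac{d}{dt}\big|_{0}\mathrm{Ad}(\exp(-tZ))X=-[Z,X]$ yields $Z^\ast\langle X^\ast,X^\ast\rangle(p)=2\langle[X,Z]_{\mathfrak m},X_{\mathfrak m}\rangle$, which is the claimed formula. (Equivalently one may expand the Koszul formula for $\nabla_{X^\ast}X^\ast$ and use $[X^\ast,Z^\ast]=-[X,Z]^\ast$.)

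Granting the formula, both directions follow at once. Since $\langle,\rangle$ is nondegenerate on $\mathfrak m$, the identity $(\nabla_{\dot\gamma}\dot\gamma)_p=\lambda X_{\mathfrak m}$ is equivalent, after pairing with arbitrary $Z\in\mathfrak m$, to $\langle[X,Z]_{\mathfrak m},X_{\mathfrak m}\rangle=k\langle X_{\mathfrak m},Z\rangle$ with $k=-\lambda$. Thus $\gamma$ is a geodesic up to reparametrization precisely when the stated condition holds, and $k=0$ corresponds to $\nabla_{\dot\gamma}\dot\gamma=0$, i.e. $t$ affine. For $k\neq0$ one has $\nabla_{\dot\gamma}\dot\gamma=-k\dot\gamma$ along the orbit; writing a new parameter $s=s(t)$ and $\gamma_s=d\gamma/ds$, a direct computation gives $\nabla_{\gamma_s}\gamma_s=-\dot s^{-2}\big(\ddot s/\dot s+k\big)\dot\gamma$, so the affine condition $\nabla_{\gamma_s}\gamma_s=0$ is the ODE $\ddot s=-k\dot s$, solved up to an affine change by $s=e^{-kt}$.

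Finally, for the null--curve assertion I would put $Z=X_{\mathfrak m}$ in the condition. Writing $X=X_{\mathfrak m}+X_{\mathfrak h}$ and using $[\mathfrak h,\mathfrak m]\subset\mathfrak m$ together with the fact that $\mathrm{ad}(X_{\mathfrak h})$ is skew--symmetric on $(\mathfrak m,\langle,\rangle)$ (it is the differential of the isometric isotropy action), the left-hand side $\langle[X,X_{\mathfrak m}]_{\mathfrak m},X_{\mathfrak m}\rangle=\langle[X_{\mathfrak h},X_{\mathfrak m}],X_{\mathfrak m}\rangle$ vanishes. Hence $k\langle X_{\mathfrak m},X_{\mathfrak m}\rangle=0$, so $k\neq0$ forces $\langle X_{\mathfrak m},X_{\mathfrak m}\rangle=0$: the velocity is null and the ambient metric must be indefinite. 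The only genuine obstacle is the base--point formula, and chiefly pinning down its sign; once that is fixed the remaining steps are formal, and the pseudo--Riemannian character enters solely through the use of nondegeneracy in place of positivity and through the null directions that permit $k\neq0$.
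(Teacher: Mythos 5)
Your argument is correct, but note that the paper itself gives no proof of this lemma: it states it as a quoted result, explicitly attributing both the correct formulation and the proof to Du\v{s}ek--Kowalski \cite{DK1}. So the comparison is really with that reference. Your route --- characterizing pregeodesics by $\nabla_{\dot\gamma}\dot\gamma=\lambda\dot\gamma$ with $\lambda$ constant by equivariance, and then establishing the single base-point identity $\langle(\nabla_{X^\ast}X^\ast)_p,Z\rangle=-\langle[X,Z]_{\mathfrak m},X_{\mathfrak m}\rangle$ purely from the Killing-field identities $\langle\nabla_v X^\ast,w\rangle=-\langle\nabla_w X^\ast,v\rangle$ and $\langle X^\ast,X^\ast\rangle(g\cdot p)=\langle(\mathrm{Ad}(g^{-1})X)_{\mathfrak m},(\mathrm{Ad}(g^{-1})X)_{\mathfrak m}\rangle$ --- is clean and avoids writing down any explicit formula for the connection; \cite{DK1} instead computes with the Koszul-type formula for $\nabla$ on a reductive homogeneous space, arriving at the same identity. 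The signs check out ($k=-\lambda$, giving $\ddot s=-k\dot s$ and $s=e^{-kt}$ up to affine change), and your derivation of the null-curve dichotomy from $Z=X_{\mathfrak m}$ together with skew-symmetry of $\mathrm{ad}(X_{\mathfrak h})|_{\mathfrak m}$ is exactly the standard one. Two points worth making explicit if you write this up: the degenerate case $X_{\mathfrak m}=0$ (constant curve) should be set aside before dividing by $\dot\gamma$, and the converse direction should state that the pointwise identity at $p$ propagates to all of $\gamma$ by applying the isometries $\exp(t_0X)$, so the bracket condition really does imply $\nabla_{\dot\gamma}\dot\gamma=-k\dot\gamma$ along the whole orbit.
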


The following proposition follows easily from the Geodesic Lemma.

\begin{proposition}\label{prop4}
Every geodesic in a reductive pseudo-Riemannian homogeneous space $M=G/H$ is an orbit of a one-parameter group of isometries of $G$ if and only if for each $X\in {\mathfrak m}$, there exists $A\in {\mathfrak h}$ such that $\langle [X+A,Z]_{\mathfrak m}, X\rangle=k\langle X,Z\rangle$ for any $Z\in {\mathfrak m}$. In particular, $M$ is a geodesic orbit manifold if and only if this condition holds for $G=I(M)^0$.
\end{proposition}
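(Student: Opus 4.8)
The plan is to read this off directly from the Geodesic Lemma; the only substantive point is to match the phrase ``orbit of a one-parameter group'' with the data appearing in that Lemma. The key observation I would use is that for $Y \in \mathfrak g$, decomposed as $Y = Y_{\mathfrak m} + Y_{\mathfrak h}$ relative to $\mathfrak g = \mathfrak m + \mathfrak h$, the orbit $c(t) = \exp(tY)(p)$ passes through $p$ at $t = 0$ with initial velocity $c'(0) = Y_{\mathfrak m}$ (the fundamental vector field attached to $Y_{\mathfrak h} \in \mathfrak h$ vanishes at $p$). Hence a geodesic through $p$ with initial velocity $X \in \mathfrak m$ is the image of such an orbit precisely when there is a generator $Y$ with $Y_{\mathfrak m}$ a nonzero multiple of $X$ for which $c(t)$ is a geodesic, and by the Geodesic Lemma this last condition is exactly $\langle [Y,Z]_{\mathfrak m}, Y_{\mathfrak m}\rangle = k\langle Y_{\mathfrak m}, Z\rangle$ for all $Z \in \mathfrak m$, with $A := Y_{\mathfrak h}$.

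For the forward implication I would assume every geodesic is an orbit of a one-parameter group in $G$ and fix $X \in \mathfrak m$, the case $X = 0$ being trivial. Taking the geodesic $\gamma$ with $\gamma(0) = p$ and $\gamma'(0) = X$, the hypothesis presents its image as the orbit of some $\exp(tY)$; comparing tangent directions at $p$ shows $Y_{\mathfrak m}$ is a nonzero scalar multiple of $X$, so after rescaling $Y$ I may take $Y_{\mathfrak m} = X$. Setting $A = Y_{\mathfrak h} \in \mathfrak h$ and applying the Geodesic Lemma to the geodesic $\exp(tY)(p)$ then yields $\langle [X+A, Z]_{\mathfrak m}, X\rangle = k\langle X, Z\rangle$ for all $Z \in \mathfrak m$, which is the claimed condition.

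For the converse I would take an arbitrary geodesic $\gamma$, reduce to $\gamma(0) = p$ by homogeneity, and set $X = \gamma'(0) \in \mathfrak m$. Choosing $A \in \mathfrak h$ as in the hypothesis and putting $Y = X + A$ gives $Y_{\mathfrak m} = X$, so the condition becomes $\langle [Y,Z]_{\mathfrak m}, Y_{\mathfrak m}\rangle = k\langle Y_{\mathfrak m}, Z\rangle$ and the Geodesic Lemma makes $\exp(tY)(p)$ a geodesic (for a suitable parameter) with initial velocity $Y_{\mathfrak m} = X$. Since a maximal geodesic is determined by its initial point and velocity, the image of $\gamma$ equals the orbit $\{\exp(tY)(p)\}$, so $\gamma$ is an orbit of the one-parameter group $\{\exp(tY)\} \subset G$. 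The final ``in particular'' assertion is then just the case $G = I(M)^0$, since by definition $M$ is a geodesic orbit space exactly when every geodesic is an orbit of a one-parameter group of isometries of $M$.

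I expect the obstacles here to be bookkeeping rather than conceptual. Matching the $\mathfrak m$-component of the generator with the prescribed initial velocity forces the rescaling of $Y$, and when $X$ is a null direction the affine parameter is $s = e^{-kt}$ rather than $t$, so $\exp(tY)(p)$ is a geodesic only as an unparametrized curve; neither issue affects the equivalence, since ``being an orbit of a one-parameter group'' is a property of the image curve. The one place demanding care is the converse, where I must invoke uniqueness of geodesics with given initial data to identify $\gamma$ with the orbit, keeping track of the constant $k$ from the Geodesic Lemma.
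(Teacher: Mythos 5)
Your proposal is correct and follows exactly the route the paper intends: the paper offers no written proof beyond the remark that the proposition ``follows easily from the Geodesic Lemma,'' and your argument is precisely that deduction, with the bookkeeping (matching $Y_{\mathfrak m}$ to the initial velocity, setting $A=Y_{\mathfrak h}$, and handling the reparametrization in the null case) filled in carefully.
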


Now we adapt the argument of \cite{G1} to the pseudo-Riemannian case.

\begin{theorem}
Let $(M,\langle,\rangle)$ be a connected pseudo-Riemannian weakly symmetric space, $G=I(M,\langle,\rangle)^0$, and $N$ the nilradical of $G$. If there is a reductive decomposition ${\mathfrak g}={\mathfrak m}+{\mathfrak h}$ such that ${\mathfrak n}\subset {\mathfrak m}$ and the metric restriction on $[{\mathfrak n},{\mathfrak n}]$ is positive or negative definite, then $N$ is at most $2$-step nilpotent.
\end{theorem}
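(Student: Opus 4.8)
The plan is to adapt the argument of \cite{G1} (Theorem~\ref{thm1}), using that a pseudo-Riemannian weakly symmetric space is a geodesic orbit space (Theorem~\ref{thm3}) together with the reductive criterion of Proposition~\ref{prop4}. First I would record the structural equation that the geodesic orbit property forces on the nilradical. Since $\mathfrak{n}$ is an ideal of $\mathfrak{g}$ contained in $\mathfrak{m}$, for $A\in\mathfrak{h}$ the operator $D=\mathrm{ad}(A)|_{\mathfrak{n}}$ is a derivation of $\mathfrak{n}$ (by the Jacobi identity) that is skew-symmetric for $\langle,\rangle|_{\mathfrak{n}}$: the isotropy representation of $\mathfrak{h}$ on $\mathfrak{m}\cong T_pM$ is skew-symmetric, and $[\mathfrak{h},\mathfrak{n}]\subseteq\mathfrak{n}\subseteq\mathfrak{m}$ so the $\mathfrak{m}$-projection is superfluous there. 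Applying Proposition~\ref{prop4} to $X\in\mathfrak{n}\subseteq\mathfrak{m}$ and restricting the test vectors to $Z\in\mathfrak{n}$ (so that all brackets stay inside $\mathfrak{n}\subseteq\mathfrak{m}$), I obtain: for each $X\in\mathfrak{n}$ there is a skew-symmetric derivation $D$ of $\mathfrak{n}$ and a constant $k$ with
\[
\langle [X,Z],X\rangle=\langle DX,Z\rangle+k\langle X,Z\rangle \qquad\text{for all }Z\in\mathfrak{n}.
\]

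Next I would set up the grading adapted to the metric. Let $\mathfrak{n}'=[\mathfrak{n},\mathfrak{n}]$ and put $\mathfrak{n}^0=\mathfrak{n}$, $\mathfrak{n}^{i+1}=[\mathfrak{n},\mathfrak{n}^i]$, so that $\mathfrak{n}^i\subseteq\mathfrak{n}'$ for $i\geq 1$. By hypothesis $\langle,\rangle$ is definite on $\mathfrak{n}'$, hence nondegenerate on every $\mathfrak{n}^i$ with $i\geq 1$; I can therefore form the orthogonal grading $\mathfrak{n}=\mathfrak{n}_1\oplus\mathfrak{n}_2\oplus\cdots\oplus\mathfrak{n}_s$ with $\mathfrak{n}_1=(\mathfrak{n}')^{\perp}$ and, for $i\geq 2$, $\mathfrak{n}_i$ the orthogonal complement of $\mathfrak{n}^i$ inside $\mathfrak{n}^{i-1}$. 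These summands are mutually orthogonal, each $\mathfrak{n}_i$ with $i\geq 2$ is definite, $\mathfrak{n}^i=\bigoplus_{l>i}\mathfrak{n}_l$, and $[\mathfrak{n}_i,\mathfrak{n}_j]\subseteq\bigoplus_{l\geq i+j}\mathfrak{n}_l$. Because $D$ is a derivation it preserves each $\mathfrak{n}^i$, and because $D$ is skew-symmetric and each $\mathfrak{n}^i$ ($i\geq 1$) is nondegenerate, $D$ also preserves $(\mathfrak{n}^i)^{\perp}$; hence $D$ preserves each graded piece $\mathfrak{n}_i$.

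With this in place I would run the contradiction. Suppose $\mathfrak{n}$ is not $2$-step, i.e. $\mathfrak{n}^2\neq 0$; then $\mathfrak{n}_3\neq 0$ (otherwise $\mathfrak{n}^2=\mathfrak{n}^3$, which by nilpotency forces $\mathfrak{n}^2=0$). Fix $X_1\in\mathfrak{n}_1$, $X_3\in\mathfrak{n}_3$, put $X=X_1+X_3$, and feed $Z=W\in\mathfrak{n}_2$ into the structural equation. The degree estimate $[\mathfrak{n}_i,\mathfrak{n}_j]\subseteq\bigoplus_{l\geq i+j}\mathfrak{n}_l$ kills every term on the left except $\langle [X_1,W],X_3\rangle$, while on the right $DX\in\mathfrak{n}_1\oplus\mathfrak{n}_3$ has no $\mathfrak{n}_2$-component and $\langle X,W\rangle=0$, so both $\langle DX,W\rangle$ and $k\langle X,W\rangle$ vanish. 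Thus $\langle [X_1,W],X_3\rangle=0$ for all such $X_1,W,X_3$; since $\langle,\rangle$ is definite, hence nondegenerate, on $\mathfrak{n}_3$, this gives $\pi_{\mathfrak{n}_3}[\mathfrak{n}_1,\mathfrak{n}_2]=0$. But $[\mathfrak{n}_1,\mathfrak{n}_2]$ projects onto $\mathfrak{n}_3$ by construction of the lower central grading (modulo $\mathfrak{n}^3$ it spans $\mathfrak{n}^2$), forcing $\mathfrak{n}_3=0$, a contradiction. Hence $\mathfrak{n}^2=0$ and $N$ is at most $2$-step.

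The main obstacle, and the reason for the definiteness hypothesis, is that Gordon's Riemannian proof rests on an orthogonal grading by the lower central series and on the skew derivation $D$ being degree-preserving; in the pseudo-Riemannian setting $\langle,\rangle|_{\mathfrak{n}}$ may be degenerate, and then neither the orthogonal complements defining the grading nor the $D$-invariance of those complements need exist. Requiring $\langle,\rangle$ definite on $[\mathfrak{n},\mathfrak{n}]$ is exactly what restores nondegeneracy of every $\mathfrak{n}^i$ with $i\geq 1$, which is all the argument uses; the possibly bad directions are confined to $\mathfrak{n}_1=(\mathfrak{n}')^{\perp}$, which enters only through $X_1$ and is never paired nondegenerately. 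A pleasant point to verify is that the extra constant $k$ coming from null homogeneous geodesics (the Geodesic Lemma) is harmless here, since it multiplies $\langle X,W\rangle$, which vanishes for the test pairs $X\in\mathfrak{n}_1\oplus\mathfrak{n}_3$, $W\in\mathfrak{n}_2$.
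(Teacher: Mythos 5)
Your proof is correct, and while it is built on the same two pillars as the paper's (the geodesic orbit property from Theorem~\ref{thm3} and the criterion of Proposition~\ref{prop4}, both feeding into an adaptation of Gordon's Riemannian argument), the mechanism you use to finish is genuinely different. The paper works with the single orthogonal splitting ${\mathfrak m}=[{\mathfrak n},{\mathfrak n}]\oplus{\mathfrak a}$: it tests the geodesic-vector identity with $X=\xi\in[{\mathfrak n},{\mathfrak n}]$ and $Z=\zeta\in{\mathfrak a}$, uses $Ad(H)$-stability of ${\mathfrak a}$ to strip off the isotropy term, concludes that $\mathrm{ad}(\zeta)|_{[{\mathfrak n},{\mathfrak n}]}$ is skew-symmetric, and then invokes the linear-algebra fact that an operator which is simultaneously nilpotent and skew-symmetric for a definite form must vanish; since ${\mathfrak n}\cap{\mathfrak a}$ generates ${\mathfrak n}$, this kills $[{\mathfrak n},[{\mathfrak n},{\mathfrak n}]]$. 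You instead construct the full orthogonal grading of ${\mathfrak n}$ by the lower central series, test with $X=X_1+X_3$ against $Z\in{\mathfrak n}_2$, and close the argument with the surjection of $[{\mathfrak n}_1,{\mathfrak n}_2]$ onto ${\mathfrak n}_3$; nilpotency of $\mathrm{ad}$ never enters your argument. Your route costs more bookkeeping (the $D$-invariance of the graded pieces, the degree estimates $[{\mathfrak n}_i,{\mathfrak n}_j]\subseteq\bigoplus_{l\geq i+j}{\mathfrak n}_l$), but it isolates exactly where definiteness is used --- nondegeneracy of each ${\mathfrak n}^i$, $i\geq 1$, so that the orthogonal complements exist and are preserved by the skew derivation --- and it makes explicit why the constant $k$ from null geodesics is harmless; the paper's route is shorter, dispatching $k$ in one line via $k\langle\xi,\zeta\rangle=0$ and concentrating the definiteness hypothesis into the single punchline about nilpotent skew-symmetric operators.
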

\begin{proof}
Let ${\mathfrak a}$ be the orthocomplement of $[{\mathfrak n},{\mathfrak n}]$ 
in ${\mathfrak m}$. By Theorem~\ref{thm3}, $M$ is a geodesic orbit space. 
Then by Proposition \ref{prop4}, for each $X\in {\mathfrak m}$, there exists 
$A\in {\mathfrak h}$ such that 
$\langle [X+A,Z]_{\mathfrak m}, X\rangle=k\langle X,Z\rangle$ for any 
$Z\in {\mathfrak m}$. Fix $\zeta\in {\mathfrak a}$. 
For any $\xi\in [{\mathfrak n},{\mathfrak n}]$, there exists $A_{\xi}$ such 
that
$$
\langle [\xi+A_{\xi},\zeta]_{\mathfrak m},\xi\rangle=k\langle\xi,\zeta\rangle=0.
$$
Since ${\mathfrak n}$ is an ideal of ${\mathfrak g}$ and ${\mathfrak m}$ is 
$Ad_G(H)$-invariant, we have $[{\mathfrak n},{\mathfrak n}]$ and then 
${\mathfrak a}$ are $Ad_G(H)$-stable. It follows that 
$\langle [\zeta,\xi]_{\mathfrak m}, \xi\rangle=0$. That is, 
$ad(\zeta)|_{[{\mathfrak n},{\mathfrak n}]}$ is skew-symmetric for any 
$\zeta\in{\mathfrak a}$ by the assumption. But $ad(\zeta)$ is nilpotent 
for any $\zeta \in {\mathfrak n}$. So $ad(\zeta)=0$ for 
$\zeta\in {\mathfrak n}\cap{\mathfrak a}$. Since 
${\mathfrak n}\cap{\mathfrak a}$ generates ${\mathfrak n}$ must have 
$[{\mathfrak n},[{\mathfrak n},{\mathfrak n}]]=0$. Thus 
${\mathfrak n}$ is at most $2$-step nilpotent.
\end{proof}

\section{Acknowledgments}
This work is supported by National Natural Science Foundation of China (No.11001133) and the Fundamental Research Funds for the Central Universities.

\end{document}